\documentclass[a4paper,11pt,finnish]{article}
\usepackage{paper3_information_spreading}

\usepackage{hyperref}
\hypersetup{
    colorlinks=true,
    linkcolor=blue,
    filecolor=magenta,      
    urlcolor=cyan,
}
\usepackage{booktabs}
\usepackage{etoolbox}

\begin{document}
\title{Flooding in weighted sparse random graphs of active and passive nodes}
\author{Hoa Ngo}
\maketitle

\setcounter{page}{1}
\begin{abstract}
This paper discusses first passage percolation and flooding on large weighted sparse random graphs with two types of nodes: active and passive nodes.
In mathematical physics passive nodes can be interpreted as closed gates where fluid flow or water cannot pass through and active nodes can be interpreted as open gates where water may keep flowing further. The model of this paper has  many applications in real life, for example, information spreading, where passive nodes are interpreted as passive receivers who may read messages but do not respond to them. In the epidemic context passive nodes may be interpreted as individuals who self-isolate themselves after having a disease to stop spreading the disease any further. When all weights on edges between active nodes and between active and passive nodes are independent and exponentially distributed (but not necessary identically distributed), this article provides an approximation formula for the weighted typical flooding time.
\end{abstract}

{\bf Keywords}: first passage percolation, weighted flooding time, general configuration model, sparse random graph, active and passive node

\section{Introduction}
First passage percolation is one of the classical models in probability theory and mathematical physics introduced by Hammersley and Welsh \cite{HW} in $1965$ as a generalization of Bernoulli percolation \cite{BH,DC}. Its large interest due to the model simplicity and its various applications from theoretical physics to biology \cite{ME, GK, AP, GM, OG}. 
The first study rooted in $1957$ by Broadbent and Hammersley \cite{BH}, when they studied  
the first passage time of fluid flow through the random properties of a porous medium.
The porous medium can be modelled as a random undirected connected graph $G$ where each undirected edge $e$ is attached with an independent nonnegative random weight $W_e$ drawn from distribution $L_e$. The weight $W_e$ represents a transmission time of fluid flow from an active node to another node.
Hammersley and Welsh \cite{HW} defined a transmission time from $a$ to $b$ along a connected path $\pi$ on $G$ by 
\begin{align}
\label{definition: The active transmitter time of the path}
t_G(\pi)= \sum_{e\in\pi}W_e.
\end{align}
The first passage time from $u$ to $v$ on $G$ is defined by
\begin{align*}
\tau_G(u,v) = \inf_{\pi\in S}t_G(\pi),
\end{align*}
where $S$ is a set of all possible paths from $a$ to $b$. 
The flooding time of node $a$ in $G$ is defined by 
\begin{align}
\flood_G(a) = \max_b\tau_G(a,b),
\end{align}
where the maximum is taken over all nodes $b$ in $G$. \\

When $G$ is complete graph with $n$ nodes and each edge weight is independent and has an identical exponential distribution $L_e = \Exp(1)$, Janson \cite{J} showed in $1998$ that
\begin{align}
\frac{\tau_G(a,b)}{\log n/n}&\prto 1,\\
\frac{\flood_G(U_n)}{\log n/n}&\prto 2,\\
\frac{\max_a\flood_G(a)}{\log n/n}&\prto 3,
\end{align}
where $U_n$ is a uniformly chosen node in $[n]=\{1,2,3,...,n\}$ and notation "$\prto$" means convergence in probability.
The results have inspired many studies following other results in various settings (see e,g. \cite{DKLP,BHH, BHH17, ADL, AL, PAL, LN19}).
When $G = G(n, (d_i)_{i=1}^{n})$ is a sparse random graph with a given degree sequence $(d_i)_{i=1}^{n}$  constructed via configuration model \cite{B,RHI} and edge weights are independently of each other and identically exponentially distributed with parameter $1$ ($L_e = \Exp(1)$), it has been showed in \cite{ADL} that,
\begin{align}
\frac{\tau_G(a,b)}{\log n}&\prto \frac{1}{\nu-1},\\
\frac{\flood_G(U_n)}{\log n}&\prto \frac{1}{\nu-1}+\frac{1}{\dmin},\\
\frac{\max_a\flood_G(a)}{\log n}&\prto \frac{1}{\nu-1}+\frac{2}{\dmin},
\end{align}
where $\dmin = \min_{1\le i\le n} d_i\ge 3$, 
$\nu = \lim_{n\to\infty}\frac{\E(d_{U_n}^2)}{\E(d_{U_n})}-1\gr 1$ and $U_n$ is a uniformly chosen node in $[n]$ .\\

This paper generalizes the model of Hammersley and Welsh by including passive nodes and studies how these nodes slow down the typical flooding time on a large weighted sparse random graph. 


\section{Definitions and notations} 

Let $G = (V,E)$ be a finite undirected random graph, where $V$ is the set of nodes and $E$ is the set of edges. In this paper it is assumed that the set of nodes $V$ consists of active nodes $V_1$ and passive nodes $V_2$; the set of edges $E$ consists of edges from active to active $E_{11}$, from active to passive $E_{12}$ and from passive to passive $E_{22}$. The indices $i=1,2$ are used to refer to different types of nodes: active and passive nodes. The size of set $V$ is denoted by $n := \abs{V}$. Note that $V = V_1\cup V_2$ and $n = n_1 + n_2$, where $n_1$ and $n_2$ are the number of active and passive nodes, respectively. By symmetry, we see that $E_{12} = E_{21}$, and hence $E = E_{11}\cup E_{12}\cup E_{22}$.


\subsubsection*{Walkable path}

A path of length $\ell$ from $a$ to $b$ is a sequence $\pi:= (v_0,e_1,v_1,e_2,\dots, v_{\ell-1},e_\ell,v_\ell)$, where nodes $v_0 = a, v_1,\dots, v_\ell=b$  in $V$ (not necessary distinct) and edges between two nodes $e_j=\{v_{j-1},v_j\}$  ($j=1,2,\dots,\ell$) in $E$. A path $\pi$ from $a$ to $b$ is said to be \textit{walkable} if nodes $(v_j)_{j\ls\ell}$ in $\pi$ are active.
An inverse path of $\pi$ is defined by $\pi^{-1}:= (v_\ell,e_{\ell},v_{\ell-1},\dots,e_2,v_1,e_1,v_0)$.
We say that a path $\pi$ is \textit{strongly walkable}, if also its inverse path is walkable. Note that all paths in undirected (active) subgraph $G_1: = (V_1,E_{11})$ are strongly walkable.

\subsection{Weighted random graphs}
An undirected graph $G$ equipped with random weights $W = \{W_e\}$ on its edges is called a \textit{weighted random undirected graph} $G$.
\subsubsection*{Weighted first passage time} 
The \textit{weighted first passage time} from $a$ to $b$ on $G$ is defined by
\begin{align*}
\tau(a,b) = \inf_{\pi\in S} \sum_{e\in\pi} W_e,
\end{align*}
where $S$ is the set of all walkable paths from $a$ to $b$. If there does not exist a walkable path from $a$ to $b$ (\ie $S = \emptyset$), we set $\tau(a,b) = \infty$.

\subsubsection*{Weighted flooding times} 
The weighted flooding time of active node $a$ in $V_1$ and on $G$ is defined by
\begin{align*}
\label{definition:flooding time}
\flood(a) &= \max\{\tau(a,b)\ls \infty : b\in V\}.
\end{align*}

\subsection{General configuration model}
\label{configuration model of three types of half-edges}
A classical way to construct an undirected random graph $G$ with a given degree sequence $(d_i)_{i=1}^{n}$  is to use configuration model introduced by \Bollobas \cite{B} (see also e.g.\; \cite{RHI}). This classical configuration consists only one type of nodes and half-edges, where each half-edge is matched uniformly at random to unmatched half-edges until none of them are left and the same processes is repeated until all half-edges are matched. Let us introduce in the following a \textit{general configuration model with two types of nodes} (see related work \cite{CO,SB}). In this presented general configuration model type-$1$ and type-$2$ nodes can be anything, need not be necessary active or passive nodes. \\

Let $\bd = \{(d_{11}(v))_{v\in V_1}, (d_{12}(v))_{v\in V_1}, (d_{21}(v))_{v\in V_2}, (d_{22}(v))_{v\in V_2}\}$ be a collection of integer sequences satisfying the following properties:
\begin{itemize}
\item[(i)] $\sum_{v\in V_1}d_{11}(v)$ and $\sum_{v\in V_1}d_{22}(v)$ are even,
\item[(ii)]
\label{balance condition}
$\sum_{v\in V_1}d_{12}(v) = \sum_{v\in V_2}d_{21}(v)$ (\textit{the sum condition of bipartite graph}).
\end{itemize}
For each node $u$ in $V_1$ we attach $d_{11}(u)$ and $d_{12}(u)$ labelled elements called type-$11$ and type-$12$ half-edges, respectively. Similarly  each node $v$ in $V_2$ are attached with $d_{22}(v)$ and $d_{21}(v)$ labelled elements called type-$22$ and type-$21$ half-edges, respectively. We pair half-edges uniformly at random until no half-edges are left according to the following rules:
\begin{enumerate}
\label{matching 1}
\item Type-$ii$ half-edges are paired with each other for all $i=1,2$,
\item Each type-$12$ half-edge is paired to a type-$21$ half-edge.
\end{enumerate}
A pair of matched half-edges in above form the different types of edges: type-$ij$ edges, where $i,j = 1,2$. The matchings may result in self-loops or parallel edges, and hence the obtained graph is called a multigraph, denoted by  $\tG = (V,\bd)$, where $V=V_1\cup V_2$. Note that in this construction $\tG$ has three multi-subgraphs \ie $\tG = \tG_1\cup \tG_2\cup \tG_3$, where $\tG_1:=(V_1, \bd_{11})$,  $\tG_2:=(V_2, \bd_{22})$, $\tG_3:=(V, \bd_{12}\cup\bd_{21})$ (obtained by pairing half-edges according to the above rules $1$ and $2$) and $\bd_{ij} = (d_{ij}(v))_{v\in V_i}$ for all $i,j= 1,2$. 
\\ 

A graph is said to be \textit{simple} if it does not contain self-loops and parallel edges. Conditional on $\tG$ is being simple, we obtain a simple random graph with given degree sequence $\bd$ denoted by $G=G_{1}\cup G_{2}\cup G_{3}$ \cite{BS13,J09}, where all subgraphs $G_k$ ($k=1,2,3$) are simple and uniformly distributed with given degree sequences \cite{RHI}.  We see that $G$ is simple if and only if all its subgraphs $G_k$ are simple. To ensure the probability that $\tG$ is simple stays away from zero, we assume that the degree sequences $\bd_{11}$ and $\bd_{22}$ satisfy \Erdos--Gallai conditions \cite{MOA} and the degree sequence $\bd_{12}\cup\bd_{21}$ satisfies Gale--Ryser conditions \cite{G57}.


\section{Model descriptions}
\label{model}
Let $G^{(\kappa)} = (V^{(\kappa)},\bd^{(\kappa)})$ be a random graph of active and passive nodes constructed as in Chapter \ref{configuration model of three types of half-edges}, where each index $\kappa$ takes positive integer value. All edges in $G^{(\kappa)}$ are attached with random weights $W_e^{(\kappa)}$ according to the following:
\begin{itemize}
\item $W_e^{(\kappa)} \st \Exp(\lambda_{11})$ if $e\in E_{11}^{(\kappa)}$,
\item $W_e^{(\kappa)} \st \Exp(\lambda_{12})$ if $e\in E_{12}^{(\kappa)}$,
\item all edge weights mentioned above are independent, 
\end{itemize}
where rate parameters $\lambda_{11}$ and $\lambda_{12}$ are strictly positive.
\subsection{Conditions for nodes}
The number of active and passive nodes are assumed to have the same order of $\kappa$ i.e. $n_1^{(\kappa)}=\btheta(\kappa)$ and $n_2^{(\kappa)}=\btheta(\kappa)$ implying also that $n^{(\kappa)} =\btheta(\kappa)$. 
The empirical type-$i1$ degree distribution is  defined by
\[p_{i1}^{(\kappa)}(j) = \frac{\abs{\{v\in V_i^{(\kappa)}:d_{i1}^{(\kappa)}(v) = j\}}}{n_i^{(\kappa)}}\]
for all integers $j\ge 0$, where $i=1,2$. The degree sequence $\bd^{(\kappa)}$ is assumed to satisfies the following two conditions: Regularity conditions \ref{regularity conditions} (introduced in \cite{MR} or \cite{RHII}) and Blanchet and Stauffer conditions  \ref{asymptotic bipartite graph conditions} \cite{BS13} (asymptotic bipartite graph conditions).

\begin{condition} 
\label{regularity conditions}
\textit{(Regularity conditions)}. There exist limiting distributions $p_{i1} = (p_{i1}(j))_{j\in\N}$ ($i=1,2$) and positive integer $\kappa_0$ such that
\begin{enumerate}
\item[(1)] $p_{i1}^{(\kappa)}(j)\to p_{i1}(j)$ for all integers $j\ge 1$ as $\kappa\to\infty$,
\item[(2)] $\sum_j j^{2+\epsilon} p_{i1}(j) = O(1)$ for some $\epsilon\gr 0$ (bounded second moment),
\item[(3)] $\min_{v\in V_i}d_{i1}^{(\kappa)}(j) = \delta_{i1} $ for all $\kappa\ge \kappa_0$ and $p_{i1}(\delta_i)\gr 0$, where $\delta_{11}\ge 3$ and  $\delta_{21}\ge 1$.
\end{enumerate}
\end{condition}

The means of limiting distribution $p_{11}$ and its downshifted size biasing
distribution (see definition in \cite{LN17}) are denoted by
\begin{align*}
\mu_{11} := \sum_{j}j p_{11}(j)\quad\text{and}\quad \nu_{11} := \frac{1}{\mu_{11}}\sum_{j}j(j-1)p_{11}(j),
\end{align*} 
respectively, and both means are assumed to be strictly positive numbers.\\

Let us denote $N = \sum _{v\in V_1}d_{12}^{(\kappa)}(v) = \sum _{v\in V_2}d_{21}^{(\kappa)}(v)$. 
We order degree sequences $(d_{12}^{(\kappa)}(v))_{v\in V_1}$ and $(d_{21}^{(\kappa)}(v))_{v\in V_2}$ in decreasing orders as $s_1\ge s_2\ge\cdots\ge s_{n_1}$ and $t_1\ge t_2\ge\cdots\ge t_{n_2}$, respectively. Denote $s = \max_i s_i$ and $t = \max_i t_i$.

\begin{condition} (Blanchet and Stauffer conditions \cite{BS13}). 
\label{asymptotic bipartite graph conditions}
The following two conditions hold:
\begin{enumerate}
\item[(i)]\
\begin{align*}
\sum_i\sum_j s_i(s_i-1)t_j(t_j-1) = O(N^2).
\end{align*}
\item[(ii)]  For any $m\ge 1$,
\begin{align*}
\sum_{i= t \wedge m}^{n_1} s_i &= \Omega(N),\\
\sum_{i= s \wedge m}^{n_2} t_i &= \Omega(N).
\end{align*}
\end{enumerate}

\end{condition}

\subsection{Main result}
\label{main result}
\begin{theorem}
Consider a sequence of random simple graphs $G^{(\kappa)}=(V^{(\kappa)},\bd^{(\kappa)})$ so that its degree sequence $\bd^{(\kappa)}$ satisfies  Conditions \ref{regularity conditions} and \ref{asymptotic bipartite graph conditions}. Then for a uniformly chosen active node $A$ in $G^{(\kappa)}$,
\begin{align*}
\frac{\flood(A)}{\log\kappa} &\prto \frac{1}{\lambda_{11}(\nu_{11}-1)} + \frac{1}{\lambda_{11}\delta_{11}\wedge\lambda_{12}\delta_{21}}
\end{align*}
as $\kappa$ tends to infinity.
\end{theorem}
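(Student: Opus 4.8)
The plan is to exploit the special role of the active subgraph $G_1=(V_1,E_{11})$ together with the fact that a walkable path out of $A$ can visit a passive node only as its final endpoint. Writing $\tau(A,b)=\min_{u}\bigl[\tau(A,u)+W_{\{u,b\}}\bigr]$, where $u$ ranges over the active neighbours of $b$, we obtain the decomposition
$$\flood(A)=\max\Bigl(\max_{b\in V_1}\tau(A,b),\ \max_{b\in V_2}\tau(A,b)\Bigr),$$
in which first passage times between active nodes are computed entirely inside $G_1$ while each passive node contributes only one final type-$12$ edge of weight $\Exp(\lambda_{12})$. I would treat the common ``bulk'' growth of the explored cluster and the two ``last-edge'' contributions separately, so that the first term of the formula comes from the bulk and the second from the slowest-reached extremal node.

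First I would set up the branching-process approximation. Under Condition \ref{regularity conditions} the neighbourhood of a uniformly chosen active node in $G_1$ converges locally to a unimodular Galton--Watson tree whose size-biased offspring mean is $\nu_{11}$; placing i.i.d.\ $\Exp(\lambda_{11})$ weights on the edges turns the first-passage exploration into a Crump--Mode--Jagers process whose Malthusian parameter $\alpha$ solves $\nu_{11}\lambda_{11}/(\lambda_{11}+\alpha)=1$, i.e.\ $\alpha=\lambda_{11}(\nu_{11}-1)>0$ (positivity being forced by $\delta_{11}\ge 3$, whence $G_1$ is connected with high probability). Consequently the volume of the first-passage ball of radius $t$ around $A$ grows like $e^{\alpha t}$, covering all but an $o(1)$-fraction of $V_1$ by time $t_{\mathrm{bulk}}=(1+\epsilon)\log\kappa/\alpha$. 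Coupling the exploration from $A$ with one from an independent uniform active node and running a collision argument then yields $\tau/\log\kappa\prto 1/(\lambda_{11}(\nu_{11}-1))$, the first term.

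For the upper bound on the flooding time I would argue that, once the bulk is covered, every remaining node is infected at a rate controlled by its degree. A node $b\in V_1$ of degree $d\ge\delta_{11}$ all of whose neighbours are infected by $t_{\mathrm{bulk}}$ satisfies $\tau(A,b)\le t_{\mathrm{bulk}}+\min_{i\le d}W_i$ with $\min_{i\le d}W_i\st\Exp(d\lambda_{11})$, while a passive node of type-$21$ degree $d\ge\delta_{21}$ obeys the analogous bound at rate $d\lambda_{12}$. Since the slowest decay comes from the minimal degrees, a union bound over the $O(\kappa)$ active and $O(\kappa)$ passive nodes shows that with high probability every node is reached by $t_{\mathrm{bulk}}+(1+\epsilon)\log\kappa/(\lambda_{11}\delta_{11}\wedge\lambda_{12}\delta_{21})$, the $\wedge$ arising because the largest residual time corresponds to the smallest rate. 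For the matching lower bound I would use that Condition \ref{regularity conditions}(3) supplies $\Theta(\kappa)$ active nodes of degree exactly $\delta_{11}$ and, via Condition \ref{asymptotic bipartite graph conditions}, $\Theta(\kappa)$ passive nodes of minimal type-$21$ degree $\delta_{21}$; the residual times of these extremal nodes are asymptotically independent $\Exp(\delta_{11}\lambda_{11})$, respectively $\Exp(\delta_{21}\lambda_{12})$, variables, so their maximum is $(1-\epsilon)\log\kappa/(\lambda_{11}\delta_{11}\wedge\lambda_{12}\delta_{21})$, and their neighbours are not reached before $t_{\mathrm{bulk}}$. Adding the two contributions closes the gap.

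The main obstacle will be the decoupling needed in both directions: for the upper bound I must ensure, \emph{uniformly} over all extremal nodes simultaneously, that a node's incident edges are asymptotically independent of the times at which its neighbours are infected and that those neighbours are indeed all infected by $t_{\mathrm{bulk}}$ despite this being a maximum over $\Theta(\kappa)$ nodes; for the lower bound I must establish the asymptotic independence of the residual times of the $\Theta(\kappa)$ extremal nodes so as to run the extreme-value (second-moment) argument. A secondary technical point is justifying the local tree approximation for the type-$12/21$ edges, where the bipartite degree irregularities force me to invoke the Blanchet--Stauffer Condition \ref{asymptotic bipartite graph conditions} (together with the simplicity conditions) to guarantee that the passive nodes are reachable and that their degree statistics behave as in the idealized configuration model.
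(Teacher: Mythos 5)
Your overall architecture matches the paper's: the split $\flood(A)=\max(\max_{b\in V_1}\tau(A,b),\max_{b\in V_2}\tau(A,b))$, the routing of each passive node through a single final type-$12$ edge, and the lower bound via $\Theta(\kappa)$ minimal-degree nodes whose incident weights all exceed $(1-\epsilon)\log\kappa/(\lambda_{1i}\delta_{i1})$ are exactly the paper's devices (the paper makes your ``asymptotic independence'' rigorous with a Chebyshev second-moment count of bad nodes plus a ball-disjointness proposition decoupling ``bad'' from ``far from $A$''), and your Crump--Mode--Jagers computation $\alpha=\lambda_{11}(\nu_{11}-1)$ is precisely what underlies the Amini--Lelarge lemmas the paper cites as black boxes.

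The genuine gap is in your upper bound, and it is not merely technical. Your scheme is one-sided: cover the bulk by $t_{\mathrm{bulk}}=(1+\epsilon)\log\kappa/\alpha$, then give each node one residual exponential edge and union bound over $\Theta(\kappa)$ nodes. But the hypothesis of your residual step --- ``all of $b$'s neighbours are infected by $t_{\mathrm{bulk}}$'' --- cannot hold simultaneously for all $b$: since every node is a neighbour of some node, the conjunction of these events over $b\in V$ is exactly the event $\flood(A)\le t_{\mathrm{bulk}}$, which the theorem itself (its lower bound, with second term strictly positive) shows fails with high probability for small $\epsilon$. So the nodes you most need to control, the late ones, are precisely those for which your hypothesis breaks, and nothing in your sketch rules out adjacent slow nodes or slow local clusters; even for a single fixed neighbour, the probability of being uninfected at $t_{\mathrm{bulk}}$ decays only like a small power of $\kappa$, not $o(\kappa^{-1})$, so the union bound fails on its own terms. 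You flag this as ``the main obstacle'' but supply no mechanism. The missing idea, which is the core of the paper's proof, is the meet-in-the-middle argument: grow from \emph{every} target $b$ (from $u_b$, the minimum-weight active neighbour, when $b$ is passive) its own ball of $\beta=\Theta(\sqrt{n_1\log n_1})$ active nodes; establish per-node tail bounds with $o(n_1^{-1})$ decay for $T_b(\alpha)$ and $T_b(\alpha,\beta)$ (for passive targets, for $T_{u_b}(\alpha)+W_b$, which the paper handles by stochastic domination with a $\delta_{11}$-regular graph carrying an independent boundary weight, citing LN19), so the union bound is legitimate; then use that any two $\beta$-balls intersect with high probability, giving $\tau(a,b)\le T_a(\beta)+T_{u_b}(\beta)+W_b$ simultaneously for all pairs. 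In this way the ``last mile'' is absorbed into the ball grown backwards from $b$ rather than being predicated on $b$'s neighbours having been reached on time. Without this (or an equivalent device achieving $o(\kappa^{-1})$ per-node failure probability), your upper bound does not close.
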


\section{Proof of main result}
To keep notations simple all indexes $\kappa$ are dropped out from now (unless otherwise mentioned), and it is written $n_1, n_2, n, V_1, d_{11}$ and so on. Note that $n_i\to\infty$ if and only if $\kappa\to\infty$.\\  

By assumption the minimum type-$11$ degree is at least $3$, and hence $G_1$ is connected \whp (see e.g.\;\cite{ADL}[Lemma 2.1]). Since each passive node has at least an active neighbour ($\delta_{21}\ge 1)$, we conclude that \whp $G$ is connected. Furthermore, for any given two nodes $a$ in $V_1$ and $b$ in $V$, \whp there exists a walkable path from $a$ to $b$ in $G$. Note that $G$ may remain still connected even if its subgraphs $G_2$ and $G_3$ are not connected. To ease an analysis of weighted flooding time we may remove all edges between two passive nodes since their edge weights do not affect on flooding time.

\subsection{Proof of upper bound}
In this section it will be showed that for any $\epsilon\gr 0$ \whp,
\begin{align}
\label{typical flooding time: upper bound}
\flood(A) &\le \Big(\frac{1}{\lambda_{11}(\nu_{11}-1)} + \frac{1}{\lambda_{11}\delta_{11}\wedge\lambda_{12}\delta_{21}}\ + \epsilon\Big)\log \kappa. 
\end{align}
We notice that the flooding time of $A$ consists of two parts: flooding from $A$ to all active nodes and flooding from $A$ to all passive nodes.  These  quantities are defined by
\begin{align*}
\flood_1(A) = \max_{b\in V_1}\tau(A,b)\quad\quad\text{and}\quad\quad \flood_2(A) = \max_{b\in V_2}\tau(A,b)
\end{align*}
and called the \textit{flooding time 1} and the \textit{flooding time 2}, respectively. In the following subchapters it will be derived upper bounds for flooding times $1$ and $2$.

\subsubsection{Upper bound of flooding time 1}

\subsubsection*{Definitions and notations}
The set of $t$-radius \textit{active neighbourhood} of $a$ in $V_1$ is defined by
\begin{align*}
B_1(a,t) = \{b\in V_1: \tau(a,b)\le t\}
\end{align*}
Note that the restriction on set $V_1$, $\tau$ is metric on $V_1$. The time to reach $k$ active nodes is defined by 
\begin{align*}
T_a(k) = \min\{t\ge 0: \abs{B_1(a,t)}\ge k+1\}
\end{align*}
Denote scale parameters (depending on $\kappa$)
\begin{align*}
 \alpha &= \lfloor \log^3 n_1 \rfloor,\\
 \beta &=\Big\lfloor 3\sqrt{\tfrac{\mu_{11}}{\nu_{11}-1}n_1\log n_1} \Big\rfloor,
\end{align*}
and define $T_A(\alpha,\beta) = T_A(\beta) -T_A(\alpha)$.
Let us introduce the next three results from \textit{Amini and Lelarge} \cite{AL} (or see alternatively \cite{DKLP}).

\begin{proposition}
\label{weighted flooding time 1: two balls with the size of beta intersect}
With high probability,
\begin{align*}
\tau(a,b) \le T_a(\beta) + T_b(\beta)
\end{align*}
for all $a$ and $b$ in $V_1$. 
\end{proposition}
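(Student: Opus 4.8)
The plan is to prove the proposition by a ``meeting in the middle'' argument on the first passage percolation exploration from two source nodes $a$ and $b$. The key observation is that $T_a(\beta)$ is the time at which the active neighbourhood of $a$ first contains $\beta+1$ nodes, so that by time $T_a(\beta)$ the set $B_1(a, T_a(\beta))$ has size of order $\beta = \Theta(\sqrt{n_1 \log n_1})$. Likewise $B_1(b, T_b(\beta))$ has size of order $\beta$ by time $T_b(\beta)$. The heart of the argument is then a birthday-paradox style collision estimate: since two sets each of size roughly $\sqrt{n_1 \log n_1}$ are being grown inside $V_1$, which has $n_1$ active nodes, with high probability the two neighbourhoods must share a common active node, or at least there must be a short (weight-cheap) edge connecting one ball to the other.

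The steps I would carry out are as follows. First I would run the two first passage percolation explorations simultaneously from $a$ and from $b$ in the active subgraph $G_1$, stopping the $a$-exploration at the moment it has discovered $\beta+1$ active nodes (time $T_a(\beta)$) and similarly for $b$. Second, I would appeal to the local weak convergence / branching-process approximation for $G_1$ established in the cited works of Amini--Lelarge \cite{AL} and \cite{DKLP}: because $\nu_{11} > 1$ the exploration is supercritical, and the neighbourhood sizes grow in such a way that by the time each ball reaches size $\beta$, the two balls are essentially independent uniform random subsets of $V_1$ of size $\Theta(\sqrt{n_1 \log n_1})$. Third, I would invoke the collision bound: the probability that two such sets fail to intersect, or fail to have an edge between them, is $o(1)$ as $n_1 \to \infty$ — this is exactly why the threshold $\beta \asymp \sqrt{n_1 \log n_1}$ is chosen, so that the expected number of collisions diverges and a second-moment or union-bound argument gives intersection with high probability. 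Fourth, once the two balls meet at some common node $c$ (or across a cheap connecting edge), I would concatenate the cheapest $a$-to-$c$ path with the cheapest $c$-to-$b$ path, giving a walkable path from $a$ to $b$ of total weight at most $T_a(\beta) + T_b(\beta)$, which is the desired bound. Finally I would upgrade the bound from a fixed pair $(a,b)$ to all pairs simultaneously by a union bound, absorbing the resulting polynomial factor into the ``with high probability'' statement using the exponential-type tail estimates on the neighbourhood growth.

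The main obstacle will be controlling the dependence between the two explorations and justifying the collision estimate uniformly over all pairs $a, b \in V_1$. Growing both balls inside the same configuration-model graph introduces correlations through the shared pool of unmatched half-edges, so I would need to either grow the two neighbourhoods on disjoint half-edge budgets or carefully track the conditional matching probabilities as half-edges are revealed; the regularity conditions (Condition \ref{regularity conditions}), in particular the bounded $(2+\epsilon)$-moment of $p_{11}$, are precisely what keep the size-biased offspring distribution well-behaved and prevent a small number of high-degree hubs from distorting the collision probability. The uniformity over all $\binom{n_1}{2}$ pairs is delicate: a naive union bound costs a factor $n_1^2$, so the collision-failure probability for a single pair must be shown to decay faster than $n_1^{-2}$, which again is the reason for taking $\beta$ logarithmically inflated beyond the bare $\sqrt{n_1}$ birthday threshold. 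I expect that these two technical points are exactly why the proposition is quoted from \cite{AL} and \cite{DKLP} rather than reproved here, and in my own write-up I would lean on their neighbourhood-coupling lemmas to supply the collision estimate with the required uniform decay.
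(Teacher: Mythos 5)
The paper offers no proof of this proposition at all: it is imported verbatim from Amini--Lelarge \cite{AL} (alternatively \cite{DKLP}), so there is no in-paper argument to compare against, and your closing guess that the result is quoted rather than reproved is exactly right. Your sketch does reconstruct the argument used in those references: grow both FPP balls to size $\beta+1$, exploit the birthday-paradox scaling $\beta \asymp \sqrt{n_1\log n_1}$ with the constant inflated so that the single-pair failure probability is $o(n_1^{-2})$, handle the dependence between the two explorations by conditioning on one revealed ball and growing the other sequentially in the configuration model, and finish with a union bound over all pairs. Your description of the two balls as ``essentially independent uniform random subsets'' is only a heuristic, but you correctly identify the conditional-exploration fix, so that is not a real objection.

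There is, however, one step that would fail as written: the fallback alternative that the balls ``fail to have an edge between them'' (and later ``or across a cheap connecting edge'') cannot deliver the claimed inequality. If the two balls are vertex-disjoint and merely joined by an edge $e=\{u,v\}$ with $u$ in the $a$-ball and $v$ in the $b$-ball, concatenation only gives $\tau(a,b)\le T_a(\beta)+W_e+T_b(\beta)$, and the extra $W_e$ is not absorbed anywhere; worse, conditionally on the balls being disjoint at these stopping times, $W_e$ is forced to be \emph{large} (one needs $W_e > T_a(\beta)-\tau(a,u)$, since otherwise $v$ would already belong to $B_1(a,T_a(\beta))$), so it cannot be argued to be ``cheap''. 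The proof must therefore produce a common \emph{vertex}, and this is what the joint FPP/configuration-model exploration gives automatically: by memorylessness of the exponential weights, the $b$-exploration discovers its $(k+1)$-st vertex by pairing a boundary half-edge with a uniformly chosen unpaired half-edge, and the discovery time $T_b(k)$ already includes the weight of the discovering edge. Hence, if any of the first $\beta$ pairings of the $b$-exploration lands on an unpaired half-edge of the already revealed $a$-ball, the corresponding $a$-ball vertex $c$ lies in both balls, with $\tau(a,c)\le T_a(\beta)$ and $\tau(b,c)\le T_b(\beta)$, which is exactly what the triangle inequality needs. Since $\delta_{11}\ge 3$ keeps order-$\beta$ unpaired half-edges on the boundary of the $a$-ball (after discounting tree edges and the with-high-probability few internal cycles), each pairing hits the $a$-ball with conditional probability of order $\beta/n_1$, so the failure probability is $\exp(-\Theta(\beta^2/n_1))$, a negative power of $n_1$ that the constant $3$ in the definition of $\beta$ makes small enough to survive your union bound over all $\binom{n_1}{2}$ pairs.
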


\begin{lemma}
\label{weighted flooding time 1: typical time to reach alpha active nodes}
For a uniformly chosen active node $A$ in $V_1$ and any $\epsilon\gr 0$,
\begin{align*}
\pr\Big(T_A(\alpha) \ge \Big(\frac{1}{\lambda_{11}\delta_{11}} + \epsilon\Big)\log n_1\Big) = o(n_1^{-1}).
\end{align*}
\end{lemma}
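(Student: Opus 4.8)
The plan is to turn the growth of the active neighbourhood into a continuous-time exploration and to exploit the memorylessness of the exponential weights. Running the shortest-weight (Dijkstra) exploration from $A$ inside the active subgraph $G_1$, let $b_m$ be the number of active half-edges leaving the ball at the instant it contains $m$ nodes. By the lack-of-memory property the residual weights on all boundary edges are fresh $\Exp(\lambda_{11})$ variables, so the time to absorb the next node is $\Exp(\lambda_{11}b_m)$; writing $X_m$ for this inter-arrival time gives the exact representation $T_A(\alpha) = \sum_{m=1}^{\alpha} X_m$, where conditionally on the past $X_m \st \Exp(\lambda_{11}b_m)$.

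First I would isolate the leading term, which already produces the announced constant. Since $A$ carries $d_{11}(A) \ge \delta_{11}$ active half-edges, $X_1 = \min_{e \ni A} W_e$ is $\Exp(\lambda_{11} d_{11}(A))$-distributed and hence stochastically dominated by $\Exp(\lambda_{11}\delta_{11})$, so that
\[
\pr\Big(X_1 \ge \big(\tfrac{1}{\lambda_{11}\delta_{11}} + \epsilon\big)\log n_1\Big) \le n_1^{-1 - \lambda_{11}\delta_{11}\epsilon} = o(n_1^{-1}).
\]
This bound is unconditional and pins down the rate $\tfrac{1}{\lambda_{11}\delta_{11}}$: the slowest single event of the whole exploration is the escape from $A$, and the precision $o(n_1^{-1})$ is exactly what is needed for the forthcoming union bound over all $n_1$ possible starting nodes.

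Next I would show that the remaining increments are negligible at this precision, because the neighbourhood grows exponentially. As $\delta_{11} \ge 3$, every newly absorbed node exposes at least $\delta_{11}-1 \ge 2$ fresh half-edges, so in the tree-like regime $b_m \ge (\delta_{11}-2)m+2$ and the mean of $\sum_{m\ge 2} X_m$ is of order $\sum_m \tfrac{1}{(\delta_{11}-2)m} = O(\log\log n_1) = o(\log n_1)$. To control its tail jointly with $X_1$ I would bound the exponential moment $\E e^{sT_A(\alpha)} = \E\prod_{m=1}^{\alpha}\frac{\lambda_{11}b_m}{\lambda_{11}b_m-s}$ with $s \uparrow \lambda_{11}\delta_{11}$, say $s = \lambda_{11}\delta_{11}(1-1/\log n_1)$. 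The factor $m=1$ then contributes only $\log n_1$, whereas the factors $m\ge 2$ multiply to a product of type $\prod_{m\ge 2}\frac{(\delta_{11}-2)m+2}{(\delta_{11}-2)(m-1)}$, polynomial in $\alpha = \lfloor\log^3 n_1\rfloor$ and so polylogarithmic in $n_1$. Here $\delta_{11}\ge 3$ is essential: it forces $(\delta_{11}-2)m+2 > \delta_{11}$ for all $m\ge 2$, so after the first step the boundary is strictly larger than at $A$ and the sum inherits the tail rate $\lambda_{11}\delta_{11}$ of its slowest term. A Markov bound gives $\pr(T_A(\alpha) \ge c\log n_1) \le \mathrm{polylog}(n_1)\,n_1^{-\lambda_{11}\delta_{11}c} = o(n_1^{-1})$ with $c = \tfrac{1}{\lambda_{11}\delta_{11}}+\epsilon$.

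The hard part is to license the linear bound $b_m \ge (\delta_{11}-2)m+2$ to the same $o(n_1^{-1})$ precision, since the configuration model is only locally tree-like. I would couple the exploration to a Galton--Watson tree (offspring the downshifted size-biased law of $p_{11}$) and control the number $C_\alpha$ of collisions, i.e.\ pairings landing on an already-exposed half-edge, up to the $\alpha$-th node. Each collision lowers some $b_m$ by at most $2$, so on $\{C_\alpha < K\}$ only $O(K)$ factors of the exponential moment are enlarged, costing merely a factor $(\log n_1)^{O(K)}$, harmless for fixed $K$. Since the exposed half-edges number at most $\alpha d_{\max} = O(\log^3 n_1\cdot n_1^{1/(2+\epsilon)})$ against $\Theta(n_1)$ available ones (using the bounded $(2+\epsilon)$-moment), a single pairing collides with probability a negative power of $n_1$; a Chernoff estimate then yields $\pr(C_\alpha \ge K) = o(n_1^{-1})$ once $K = K(\epsilon)$ is a large enough constant. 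Assembling the clean-growth estimate on $\{C_\alpha < K\}$ with this tail completes the proof, the delicate bookkeeping being to check that the few enlarged increments still leave the product polylogarithmic.
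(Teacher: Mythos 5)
The paper itself gives no proof of this lemma: it is imported verbatim from Amini--Lelarge \cite{AL} (alternatively \cite{DKLP}), so your attempt has to be judged against the standard exploration argument rather than against text in the paper. Your overall route is exactly that standard one, and most of it is sound: the Dijkstra exploration with memoryless clocks, the observation that $X_1$ is stochastically dominated by an $\Exp(\lambda_{11}\delta_{11})$ variable giving the unconditional bound $n_1^{-1-\lambda_{11}\delta_{11}\epsilon}$, the linear growth $b_m\ge(\delta_{11}-2)m+2$ in the collision-free regime, and the tilted-moment computation with $s=\lambda_{11}\delta_{11}(1-1/\log n_1)$, whose telescoping product is indeed $O(\alpha^{\delta_{11}/(\delta_{11}-2)})$, i.e.\ polylogarithmic. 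The Chernoff step and the final Markov bound are correct as computed.

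The genuine gap is in the collision step. You claim that on $\{C_\alpha<K\}$ the affected factors of the exponential moment are merely ``enlarged, costing a factor $(\log n_1)^{O(K)}$''. This fails whenever a collision pushes the boundary strictly below $\delta_{11}$: for instance, if $d_{11}(A)=\delta_{11}=3$ and the very first ring is a self-loop at $A$, then $b=1$, so $\lambda_{11}b<s$ and the conditional moment generating function at your tilt is infinite --- the factor is not large, it does not exist, and Markov's inequality yields nothing on that event. Crucially, you also cannot discard this scenario by its probability alone: a self-loop at $A$ occurs with probability $\Theta(1/n_1)$, which is \emph{not} $o(n_1^{-1})$, so unlike $\{C_\alpha\ge K\}$ it cannot be pushed into the error term. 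What is missing is a joint case analysis: on the event that a collision occurs while the boundary has size $O(1)$ (probability $O(1/n_1)$, since the ringing half-edge must hit one of $O(1)$ boundary half-edges out of $\Theta(n_1)$ unpaired ones), one must bound the conditional tail of the remaining time with a reduced tilt $s'<\lambda_{11}$, which still gives a factor $n_1^{-1/\delta_{11}-\lambda_{11}\epsilon+o(1)}$; the product of the two estimates is then $o(n_1^{-1})$. Relatedly, your identity $T_A(\alpha)=\sum_{m=1}^{\alpha}X_m$ silently assumes no collisions --- collision rings consume time as well, and those are precisely the potentially heavy increments --- so the bookkeeping must be indexed by rings rather than by absorbed nodes. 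Both defects are repairable with the ingredients you already have, but as written the assembly step fails.
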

\begin{lemma}
\label{weighted flooding time 1: typical time to reach from alpha to beta active nodes}
For a uniformly chosen active node $A$ in $V_1$ and any $\epsilon\gr 0$,
\begin{align*}
\pr\Big(T_A(\alpha,\beta) \ge \Big(\frac{1}{\lambda_{11}(\nu_{11}-1)}+\epsilon\Big)\log n_1\Big) = o(n_1^{-1}).
\end{align*}
\end{lemma}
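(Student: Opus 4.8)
The plan is to realise the active first-passage exploration started at $A$ as a growing cluster and to couple it, during the window in which its size lies between $\alpha$ and $\beta$, with a supercritical continuous-time branching process (CTBP). Every newly discovered active vertex is reached along a single edge carrying an independent $\Exp(\lambda_{11})$ weight, and since $G_1$ is locally tree-like, the forward offspring of a vertex are (asymptotically) distributed as the downshifted size-biased version of $p_{11}$, whose mean is $\nu_{11}$, while each offspring is born after an independent $\Exp(\lambda_{11})$ delay. The mean reproduction measure is therefore $\nu_{11}\lambda_{11}e^{-\lambda_{11}t}\,dt$, and its Malthusian parameter $\rho$ solves $\nu_{11}\lambda_{11}/(\rho+\lambda_{11})=1$, i.e. $\rho=\lambda_{11}(\nu_{11}-1)$. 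Because we only ever explore at most $\beta=O(\sqrt{n_1\log n_1})=o(n_1)$ vertices, the depletion of half-edges and the creation of cycles are negligible, so the coupling with the idealised CTBP can be maintained throughout $[\alpha,\beta]$.

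In the CTBP the population $N(t)$ obeys $e^{-\rho t}N(t)\to W$ almost surely; the minimum type-$11$ degree $\delta_{11}\ge 3$ forces every forward offspring count to be at least $2$, so the process never goes extinct and $W>0$ almost surely, while the bounded $(2+\epsilon)$-moment in Condition \ref{regularity conditions} gives the offspring law a finite $(1+\epsilon)$-moment, hence the Kesten--Stigum $L\log L$ condition holds and the limit martingale is uniformly integrable with $\E[W]>0$. The key device is to restart the process at the stopping time $\tau_\alpha:=T_A(\alpha)$ at which the cluster first contains $\alpha$ vertices: by the branching property the subsequent evolution is a superposition of $\alpha$ independent copies of the CTBP, so its martingale limit is $\overline W=\sum_{i=1}^{\alpha}W_i$, a sum of $\alpha$ i.i.d. copies of $W$ concentrating around $\alpha\,\E[W]$. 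On the event $\{\overline W\ge \tfrac12\alpha\,\E[W]\}$ the cluster reaches size $\beta$ by the time $s$ at which $\tfrac12\overline W e^{\rho s}\ge\beta$, so that
\begin{align*}
T_A(\alpha,\beta)=s &\le \frac{1}{\rho}\log\frac{4\beta}{\alpha\,\E[W]}\\
&=\frac{1}{\rho}\Big(\tfrac12\log n_1-\tfrac52\log\log n_1+O(1)\Big)
=\frac{1+o(1)}{2\rho}\log n_1.
\end{align*}
Since $\tfrac1{2\rho}<\tfrac1\rho=\tfrac{1}{\lambda_{11}(\nu_{11}-1)}$, this is at most $\big(\tfrac{1}{\lambda_{11}(\nu_{11}-1)}+\epsilon\big)\log n_1$ for all large $n_1$, with comfortable room to spare.

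It remains to show that the exceptional event has probability $o(n_1^{-1})$, and this is exactly why $\alpha=\lfloor\log^3 n_1\rfloor$ is chosen so large. The variables $W_i> 0$ are i.i.d. with $\E[W_i]>0$, so the lower-tail Chernoff bound gives $\pr(\overline W\le\tfrac12\alpha\,\E[W])\le e^{-I\alpha}$ for a rate $I>0$; with $\alpha\asymp\log^3 n_1\gg\log n_1$ this is $e^{-I\log^3 n_1}=o(n_1^{-1})$. The residual probability that the CTBP coupling fails before size $\beta$---through half-edge depletion, which perturbs the offspring mean only by $O(\beta/n_1)=o(1)$, or through cycle formation, whose expected count is $O(\beta^2/n_1)=O(\log n_1)$ and hence negligible against $\beta$---is controlled by the same sub-polynomial estimates and can likewise be kept $o(n_1^{-1})$; these are precisely the estimates imported from Amini--Lelarge \cite{AL} and \cite{DKLP}.

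The main obstacle is the coupling together with this strong large-deviation control. Convergence in probability of $e^{-\rho s}N(\tau_\alpha+s)$ to $\overline W$ is not enough; I need a quantitative lower bound $N(\tau_\alpha+s)\ge\tfrac12\overline We^{\rho s}$ that holds simultaneously for all $s$ up to the hitting time of $\beta$ and fails with probability only $o(n_1^{-1})$. Establishing this---via a maximal or second-moment inequality for the limiting martingale, uniformly over the window and with the half-edge depletion error fed in---is the delicate part, and the left-tail concentration of $\overline W$ together with the finiteness of the relevant exponential moments rests essentially on the $(2+\epsilon)$-moment hypothesis. Everything else is the routine CTBP bookkeeping summarised above.
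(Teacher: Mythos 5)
Your road map is the right mechanism and gets the constants right: the Malthusian parameter $\rho=\lambda_{11}(\nu_{11}-1)$, the restart at $T_A(\alpha)$, and the count $\rho^{-1}\log(\beta/\alpha)=\big(\frac{1}{2}+o(1)\big)\rho^{-1}\log n_1$ are all correct, and your sketch in fact targets the sharper constant $\frac{1}{2\lambda_{11}(\nu_{11}-1)}$ that the paper's later Proposition on $T_A(\beta)$ implicitly relies on. Be aware, however, that the paper contains no proof of this lemma at all: it is one of the "three results from Amini and Lelarge \cite{AL} (or see alternatively \cite{DKLP})" imported verbatim, so there is no internal argument to compare against, and your write-up ultimately defers the hard steps to exactly those same references.

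That deferral is the genuine gap, and it is the one you flag yourself without closing. The qualitative facts you invoke --- almost-sure convergence $e^{-\rho t}N(t)\to W$ and the Kesten--Stigum theorem --- are statements about a single idealized branching process with no rate attached; they cannot yield the claim that $N(\tau_\alpha+s)\ge\frac{1}{2}\overline{W}e^{\rho s}$ holds simultaneously up to the hitting time of $\beta$, under the coupling with the half-edge-depleted configuration model, outside an event of probability $o(n_1^{-1})$. Your Chernoff bound for $\overline{W}=\sum_{i\le\alpha}W_i$ controls only the limiting random variable, not the finite-time trajectory of the exploration, and "the coupling error can likewise be kept $o(n_1^{-1})$" is asserted rather than proved. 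The cited proofs close precisely this gap, and they do so by a more elementary route that avoids martingale limits altogether: one shows that while the cluster has $k\in[\alpha,\beta]$ vertices, the number of unpaired boundary half-edges is at least $c(\nu_{11}-1)k$ uniformly in $k$ with probability $1-o(n_1^{-1})$ (a lower-tail bound for sums of size-biased degrees, where only $o(n_1)$ half-edges have been consumed, so depletion is harmless); conditionally on this, $T_A(\alpha,\beta)$ is stochastically dominated by a sum of independent $\Exp\big(\lambda_{11}c(\nu_{11}-1)k\big)$ variables, $k=\alpha,\dots,\beta$, whose upper tail beyond $(1+\epsilon)\frac{1}{\lambda_{11}c(\nu_{11}-1)}\log(\beta/\alpha)$ is $\exp\big(-\Theta(\epsilon\,\alpha\log(\beta/\alpha))\big)=o(n_1^{-1})$ by a direct moment-generating-function computation; this is where the choice $\alpha=\lfloor\log^3 n_1\rfloor$ earns its keep. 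As it stands, your proposal is a correct outline whose crucial quantitative estimate is missing, so it effectively reduces the lemma to the same external results that the paper itself cites in place of a proof.
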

Applying the above three results, we have the following results.
\begin{proposition} For any $\epsilon\gr 0$ \whp,
\label{weighted flooding time 1: time to reach beta active nodes}
\begin{enumerate}
\item [(1)]$T_A(\beta) \le \frac{1}{2\lambda_{11}(\nu_{11}-1)}\log n_1 + \epsilon\log n_1$,
\item [(2)]$T_a(\beta) \le \frac{1}{2\lambda_{11}(\nu_{11}-1)}\log n_1 + \frac{1}{\lambda_{11}\delta_{11}}\log n_1  + \epsilon\log n_1$\:\: for all $a\in V_1$.
\end{enumerate}
\end{proposition}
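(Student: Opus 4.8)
The plan is to derive both inequalities from the decomposition $T_a(\beta) = T_a(\alpha) + T_a(\alpha,\beta)$, bounding the early phase $T_a(\alpha)$ through Lemma~\ref{weighted flooding time 1: typical time to reach alpha active nodes} and the bulk phase $T_a(\alpha,\beta)$ through Lemma~\ref{weighted flooding time 1: typical time to reach from alpha to beta active nodes}, and then to pass from the single uniformly chosen node $A$ to a bound valid simultaneously for every $a \in V_1$ by a union bound. The whole scheme hinges on the fact that both lemmas control the relevant bad event by a probability of order $o(n_1^{-1})$, which is precisely the rate that survives summation over the $n_1$ active nodes.

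For part (1) I would first argue that the early phase is negligible for a \emph{typical} source. Since $A$ is uniformly chosen, its active neighbourhood grows from the very start like a supercritical branching process with mean forward degree $\nu_{11} > 1$, so by a standard first-moment and concentration estimate it reaches $\alpha = \lfloor \log^3 n_1 \rfloor$ vertices within $O(\log\log n_1)$ time; hence $T_A(\alpha) = o(\log n_1)$ \whp and is absorbed into the slack. The bulk contribution $T_A(\alpha,\beta)$ is then handled directly by Lemma~\ref{weighted flooding time 1: typical time to reach from alpha to beta active nodes}, which bounds it by $\tfrac{1}{2\lambda_{11}(\nu_{11}-1)}\log n_1 + \tfrac{\epsilon}{2}\log n_1$ with probability $1 - o(n_1^{-1})$, the factor $\tfrac12$ reflecting that $\log\beta = (\tfrac12 + o(1))\log n_1$. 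Adding the two terms gives part (1). I would stress that it is essential \emph{not} to invoke the cruder bound of Lemma~\ref{weighted flooding time 1: typical time to reach alpha active nodes} here, since its $\tfrac{1}{\lambda_{11}\delta_{11}}\log n_1$ term would otherwise contaminate part (1); this minimum-degree penalty must not be charged to the typical source.

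For part (2) I would instead retain that penalty and make every estimate uniform in $a$. Lemma~\ref{weighted flooding time 1: typical time to reach alpha active nodes} gives $\pr(T_A(\alpha) \ge r) = o(n_1^{-1})$ for the uniform $A$ at the threshold $r = (\tfrac{1}{\lambda_{11}\delta_{11}} + \tfrac{\epsilon}{2})\log n_1$; averaging the identity $\pr(T_A(\alpha) \ge r) = \tfrac{1}{n_1}\sum_{a \in V_1} \pr(T_a(\alpha) \ge r)$ yields $\sum_{a} \pr(T_a(\alpha) \ge r) = o(1)$, and a union bound shows that \whp \emph{every} $a \in V_1$ obeys $T_a(\alpha) \le r$. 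The same average-then-union argument applied to Lemma~\ref{weighted flooding time 1: typical time to reach from alpha to beta active nodes} makes $T_a(\alpha,\beta) \le (\tfrac{1}{2\lambda_{11}(\nu_{11}-1)} + \tfrac{\epsilon}{2})\log n_1$ hold simultaneously for all $a$. Summing these two uniform bounds through $T_a(\beta) = T_a(\alpha) + T_a(\alpha,\beta)$ gives part (2).

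I expect the genuine obstacle to be this upgrade from the typical node to all nodes. The two lemmas speak only about a uniform $A$, so the property I actually need is that the \emph{per-node} failure probability is $o(n_1^{-1})$ rather than merely its average; this follows because all summands are nonnegative, so the averaged $o(n_1^{-1})$ statement forces the corresponding sum over $a$ to be $o(1)$ and the union bound closes. A subtler point is that the uniform version of Lemma~\ref{weighted flooding time 1: typical time to reach from alpha to beta active nodes} presumes the bulk growth from $\alpha$ to $\beta$ vertices to be essentially insensitive to the seed; this is exactly where the regularity hypotheses on $\bd$ (bounded second moment and $\nu_{11} > 1$) do their work, ensuring that once a polylogarithmic cluster is formed the exploration is driven by the bulk of $G_1$ and not by the idiosyncrasies of the starting node.
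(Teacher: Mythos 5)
Your proposal is correct and its skeleton is the same as the paper's: decompose $T_a(\beta) = T_a(\alpha) + T_a(\alpha,\beta)$, control the two pieces with Lemmas \ref{weighted flooding time 1: typical time to reach alpha active nodes} and \ref{weighted flooding time 1: typical time to reach from alpha to beta active nodes}, and pass to all of $V_1$ via a union bound enabled by the $o(n_1^{-1})$ failure rates; your exchangeability-then-union-bound argument for part (2) is exactly what the paper does implicitly when it writes $\max_a T_a(\alpha) \le \big(\frac{1}{\lambda_{11}\delta_{11}}+\epsilon\big)\log n_1$. Where you genuinely depart is part (1), and there your version is the more careful one: the paper's proof cites Lemma \ref{weighted flooding time 1: typical time to reach alpha active nodes} yet states a bound containing no $\frac{1}{\lambda_{11}\delta_{11}}\log n_1$ term, i.e.\ it silently uses that a \emph{typical} source satisfies $T_A(\alpha) \le \epsilon\log n_1$ \whp; taken literally, the cited lemma would contaminate part (1) with the minimum-degree term, which is precisely the trap you identified and avoided by substituting the supercritical-exploration estimate $T_A(\alpha) = O(\log\log n_1) = o(\log n_1)$ \whp (a typical-node statement available in the cited sources \cite{AL,ADL,DKLP}, so the appeal is legitimate). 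One caveat you share with the paper: both proofs use Lemma \ref{weighted flooding time 1: typical time to reach from alpha to beta active nodes} with coefficient $\frac{1}{2\lambda_{11}(\nu_{11}-1)}$, whereas the lemma as printed has $\frac{1}{\lambda_{11}(\nu_{11}-1)}$; your remark that $\log\beta = (\tfrac12 + o(1))\log n_1$ makes clear this is a typo in the lemma's statement, since otherwise neither part of the proposition would follow with the stated constants.
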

\begin{proof} 
\begin{enumerate}
\item[(1)] By Lemmas \ref{weighted flooding time 1: typical time to reach alpha active nodes} and \ref{weighted flooding time 1: typical time to reach from alpha to beta active nodes} \whp,
\begin{align*}
T_A(\beta) &= T_A(\alpha) + T_A(\alpha,\beta)\\
&\le  \frac{1}{2\lambda_{11}(\nu_{11}-1)}\log n_1 + 2\epsilon\log n_1.
\end{align*}
\item[(2)] By Lemmas \ref{weighted flooding time 1: typical time to reach alpha active nodes} and \ref{weighted flooding time 1: typical time to reach from alpha to beta active nodes} \whp,
\begin{align*}
\max_a T_a(\beta) &= \max_a \big(T_a(\alpha) + T_a(\alpha,\beta)\big)\\
&\le\frac{1}{2\lambda_{11}(\nu_{11}-1)}\log n_1 + \epsilon\log n_1 + \max_a T_a(\alpha)\\
&\le\frac{1}{2\lambda_{11}(\nu_{11}-1)}\log n_1 + \frac{1}{\lambda_{11}\delta_{11}}\log n_1 + 2\epsilon\log n_1.\\
\end{align*} 
\end{enumerate}
\end{proof}

\begin{theorem} For any $\epsilon\gr 0$ \whp,
\label{weighted flooding time 1:upper bound}
\begin{align*}
\flood_1(A)\le\frac{1}{\lambda_{11}(\nu_{11}-1)}\log \kappa + \frac{1}{\lambda_{11}\delta_{11}}\log \kappa + \epsilon\log \kappa.
\end{align*}
\end{theorem}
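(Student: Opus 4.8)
The plan is to read the bound off directly from the two propositions already in hand, with the flooding time playing the role of a diameter that is pinned down by two neighbourhood-growth estimates. First I would apply Proposition~\ref{weighted flooding time 1: two balls with the size of beta intersect}: \whp the inequality $\tau(A,b)\le T_A(\beta)+T_b(\beta)$ holds simultaneously for all $b\in V_1$. Maximizing over $b$ and noting that the first summand does not depend on $b$, this yields the clean split
\begin{align*}
\flood_1(A) = \max_{b\in V_1}\tau(A,b) \le T_A(\beta) + \max_{b\in V_1}T_b(\beta).
\end{align*}

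The two terms are then estimated separately by the two parts of Proposition~\ref{weighted flooding time 1: time to reach beta active nodes}. For the source term I would use part~(1), which bounds $T_A(\beta)$ for the typical (uniformly random) node $A$, and for the target term I would use part~(2), which bounds $T_b(\beta)$ uniformly over all $b\in V_1$. The asymmetry between the two is exactly the point: the uniform bound in part~(2) carries the additional $\frac{1}{\lambda_{11}\delta_{11}}\log n_1$ cost of escaping from a worst-case (possibly minimal-degree) target, whereas the typical source incurs no such penalty, so the final bound contains only a single copy of $\frac{1}{\lambda_{11}\delta_{11}}$. Since both parts hold \whp, they may be intersected with the event of Proposition~\ref{weighted flooding time 1: two balls with the size of beta intersect} at no asymptotic cost, and adding the two estimates gives, for any $\epsilon\gr 0$, \whp
\begin{align*}
\flood_1(A) \le \frac{1}{\lambda_{11}(\nu_{11}-1)}\log n_1 + \frac{1}{\lambda_{11}\delta_{11}}\log n_1 + 2\epsilon\log n_1,
\end{align*}
where the two half-rate contributions $\frac{1}{2\lambda_{11}(\nu_{11}-1)}$ combine into the full rate.

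It then remains to replace $\log n_1$ by $\log\kappa$. By the standing assumption $n_1=\btheta(\kappa)$, so $\log n_1 = \log\kappa + O(1)$ and the $O(1)$ difference is $o(\log\kappa)$; absorbing it together with the $2\epsilon\log n_1$ term into $\epsilon\log\kappa$ (after relabelling $\epsilon$) produces precisely the claimed inequality. I expect no genuine obstacle here, since the substantive probabilistic work — the branching-process approximation of the active neighbourhood and the tail bounds underlying Lemmas~\ref{weighted flooding time 1: typical time to reach alpha active nodes} and~\ref{weighted flooding time 1: typical time to reach from alpha to beta active nodes} — has already been carried out. The only small points needing care are intersecting the finitely many \whp events rather than invoking them in isolation, and verifying that the passage from $\log n_1$ to $\log\kappa$ is harmless, both of which are routine.
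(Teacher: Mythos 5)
Your proposal is correct and follows exactly the paper's own argument: the paper likewise combines Proposition~\ref{weighted flooding time 1: two balls with the size of beta intersect} with the two parts of Proposition~\ref{weighted flooding time 1: time to reach beta active nodes} (part~(1) for the random source $A$, part~(2) for the worst-case target), sums the bounds to get $\frac{1}{\lambda_{11}(\nu_{11}-1)}\log n_1 + \frac{1}{\lambda_{11}\delta_{11}}\log n_1 + 2\epsilon\log n_1$, and passes from $\log n_1$ to $\log\kappa$ via $n_1(\kappa)=\Theta(\kappa)$. The additional remarks about intersecting the \whp{} events and relabelling $\epsilon$ are exactly the routine details the paper leaves implicit.
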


\begin{proof}
By Propositions \ref{weighted flooding time 1: two balls with the size of beta intersect} and \ref{weighted flooding time 1: time to reach beta active nodes} \whp,
\begin{align*}
\flood_1(A) &= \max_u\tau(A,u)\\
&\le \max_u \big(T_A(\beta) + T_{u}(\beta)\big)\\
&\le T_A(\beta) + \max_u T_{u}(\beta)\\
&\le\frac{1}{\lambda_{11}(\nu_{11}-1)}\log n_1 + \frac{1}{\lambda_{11}\delta_{11}}\log n_1 + 2\epsilon\log n_1.
\end{align*}
The claim follows now by using assumption $n_1(\kappa) = \Theta(\kappa)$.
\end{proof}

\subsubsection{Upper bound of flooding time 2}
We define the active neighbourhood and the number of active neighbours of $v$ in $V$ by
\[
N_1(v) = \{a\in V_1: \{a,v\} \in E\}\quad\quad\text{and}\quad\quad \deg_1(v) = \abs{N_1(v)},
\]
respectively. The quantity $\deg_1(v)$ is called type-$1$ degree of $v$ interpreted as the number of active neighbours of node $v$. Note that in this paper $\deg_1(v)$ is not random, since the degree sequences $(d_{11}(v))_{v\in V_1}$ and $(d_{21}(v))_{v\in V_1}$ are fixed.\\

Recall that every passive node $b$ is assumed to have at least one active neighbour ($\delta_{21}\ge 1)$. Hence, for any passive node $b$ in $V_2$ there exists an active neighbour of $b$, denoted by  $u_b$, that has a minimum edge weight $W_b = \min_{a\in N_1(b)}W_{ab}$ between $b$ and $u_b$. Let us introduce the following proposition.

\begin{proposition}
\label{weighted flooding time 2; upper bound: two balls with the size of beta nodes intersect}
With high probability,
\begin{align}
\tau(a,b) \le T_a (\beta) + T_{u_b}(\beta) + W_b
\end{align}
for all $a$ in $V_1$ and $b$ in $V_2$. 
\end{proposition}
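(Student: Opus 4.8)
The plan is to reduce the passive-node bound to the already-established active-node bound (Proposition~\ref{weighted flooding time 1: two balls with the size of beta intersect}) by decomposing any walkable path to a passive node $b$ into a walk through active nodes followed by a single final edge. Since $b$ is passive, a walkable path from $a$ to $b$ must stay in $V_1$ until the very last step, at which point it crosses one type-$21$ edge into $b$. The natural candidate path is: travel optimally from $a$ to the distinguished active neighbour $u_b$ using only active nodes, then cross the edge $\{u_b,b\}$ of weight $W_b$. This gives the pointwise bound $\tau(a,b)\le \tau(a,u_b)+W_b$, and the task is to control $\tau(a,u_b)$ uniformly.

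First I would make precise that $\tau(a,u_b)$ is exactly a weighted first passage time between two active nodes, so it lives in the metric space $(V_1,\tau)$ and all the flooding-time-1 machinery applies verbatim. Then I would invoke Proposition~\ref{weighted flooding time 1: two balls with the size of beta intersect}, which states that \whp $\tau(a,u_b)\le T_a(\beta)+T_{u_b}(\beta)$ simultaneously for all pairs of active nodes. Combining these two inequalities yields
\begin{align*}
\tau(a,b) \le \tau(a,u_b) + W_b \le T_a(\beta) + T_{u_b}(\beta) + W_b,
\end{align*}
which is exactly the claim. The key structural observation is that $u_b$ is an active node, so $T_{u_b}(\beta)$ is well-defined and is covered by the \emph{uniform over all active nodes} statement in Proposition~\ref{weighted flooding time 1: two balls with the size of beta intersect}.

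The main subtlety, and the one point requiring care, is the quantifier ordering in the high-probability statement. The bound must hold simultaneously for \emph{all} passive $b$, hence for all the associated active neighbours $u_b$; but since $\{u_b : b\in V_2\}\subseteq V_1$, this is subsumed by the all-pairs guarantee already present in Proposition~\ref{weighted flooding time 1: two balls with the size of beta intersect}, so no additional union bound over $V_2$ is needed for that term. I would also verify that the candidate path $a\to u_b\to b$ is genuinely walkable: every interior node of the $a$-to-$u_b$ portion lies in $V_1$ and is therefore active, and $b$ itself is the terminal node (endpoints need not be interior-active by the walkable-path definition), so the concatenation is a valid walkable path and the infimum defining $\tau(a,b)$ is bounded above by its weight. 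Once walkability and the uniformity are settled, the proof is immediate; I expect the only genuine effort to be in writing the walkability check cleanly rather than in any probabilistic estimate.
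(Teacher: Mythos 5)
Your proposal is correct and follows essentially the same route as the paper: bound $\tau(a,b)\le\tau(a,u_b)+W_b$ via the path through the distinguished active neighbour $u_b$, then apply Proposition~\ref{weighted flooding time 1: two balls with the size of beta intersect} to $\tau(a,u_b)$ since $u_b\in V_1$. Your added checks on walkability and on the uniformity over $\{u_b: b\in V_2\}\subseteq V_1$ are exactly the implicit justifications behind the paper's two-line argument.
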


\begin{proof}
By the definition of the first passage time and Proposition \ref{weighted flooding time 1: two balls with the size of beta intersect} \whp,
\begin{align*}
\tau(a,b)&\le \tau(a,u_b) + W_b\\
&\le T_a(\beta) + T_{u_b}(\beta) + W_b
\end{align*}
for all $a\in V_1$ and $b\in V_2$.
\end{proof}

\begin{proposition} 
\label{weighted flooding time 2: time to reach alpha + 1 active nodes}
For a uniformly chosen passive node $B$ in $V_2$, any $0\le x\le 1$ and any $\epsilon\gr 0$,
\begin{align*}
\pr\left( T_{u_B}(\alpha_{n_1}) + W_B \ge \Big( \frac{x}{\lambda_{11} \delta_{11} \wedge \lambda_{12}\delta_{21}} + \epsilon \Big) \log n_1 \right)
 = o(n_1^{-x}).
\end{align*}
\begin{proof}
Let $G_{\delta_{11}}$ be $\delta_{11}$-regular random graph on $n_1$ nodes (see definition \cite{DKLP}). Let $u^*$ be a uniformly chosen node and $T_{u^*}(\alpha_{n_1})$ be the time to reach $\alpha_{n_1}$ nodes in $G_{\delta_{11}}$. Denote $t_{n_1} = \Big(\frac{x}{\lambda_{11} \delta_{11} \wedge \lambda_{12}\delta_{21}} + \epsilon \Big) \log n_1$ and let  $X$ be an exponential random weight with rate parameter $\lambda_{11}\delta_{11}\gr 0$ independent of $T_{u^*}(\alpha_{n_1})$ and $G_{\delta_{11}}$. By stochastic ordering $T_{u_B}(\alpha_{n_1})\lest T_{u^*}(\alpha_{n_1})$ and $W_B\lest X$, and independences of random variables,
\begin{align*}
T_{u_B}(\alpha_{n_1}) + W_B \lest T_{u^*}(\alpha_{n_1}) + X.
\end{align*}
Now by \cite{LN19}[Proposition 1] $\pr(T_{u^*}(\alpha_{n_1}) + X \ge t_{n_1}) = o(n_1^{-x})$, and hence,
\begin{align*}
\pr( T_{u_B}(\alpha_{n_1}) + W_B \ge t_{n_1})\le \pr(T_{u^*}(\alpha_{n_1}) + X \ge t_{n_1}) = o(n_1^{-x}).
\end{align*}
\end{proof}
\end{proposition}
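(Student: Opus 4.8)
The plan is to dominate each of the two summands stochastically by an explicit, analysable object and then to read off the tail of their sum, the decay rate being pinned to the minimum $\lambda_{11}\delta_{11}\wedge\lambda_{12}\delta_{21}$ as a convolution of two exponential tails.

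First I would identify the two ingredients. The weight $W_B=\min_{a\in N_1(B)}W_{aB}$ is a minimum of $d_{21}(B)\ge\delta_{21}$ independent $\Exp(\lambda_{12})$ weights, so $W_B\st\Exp(\lambda_{12}\,d_{21}(B))$ and is stochastically dominated by $Y\st\Exp(\lambda_{12}\delta_{21})$, because a minimum taken over fewer edges is stochastically larger. For the exploration term I would exploit that every active node carries at least $\delta_{11}$ type-$11$ half-edges: revealing type-$11$ edges in increasing order of weight couples the first-passage growth from $u_B$ in $G_1$ with the growth from a typical node $u^*$ in a $\delta_{11}$-regular weighted graph $G_{\delta_{11}}$ so that the extra half-edges in $G_1$ only accelerate the spread, giving $T_{u_B}(\alpha)\lest T_{u^*}(\alpha)$ (with $\alpha=\lfloor\log^3 n_1\rfloor$).

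Next I would combine the two bounds. Conditionally on the identity of $u_B$, the quantity $T_{u_B}(\alpha)$ is a function of type-$11$ weights alone, whereas $W_B$ (and the choice of $u_B$) is a function of type-$12$ weights alone; these two weight families are independent, and by the distributional vertex-symmetry of $G_{\delta_{11}}$ the law of $T_{u^*}(\alpha)$ does not depend on the starting node. Hence I may realise $Y$ independently of $T_{u^*}(\alpha)$ and obtain
\[
T_{u_B}(\alpha)+W_B \lest T_{u^*}(\alpha)+Y .
\]
It then remains to bound $\pr\big(T_{u^*}(\alpha)+Y\ge t_{n_1}\big)$ with $t_{n_1}=\big(\tfrac{x}{\lambda_{11}\delta_{11}\wedge\lambda_{12}\delta_{21}}+\epsilon\big)\log n_1$, which is precisely \cite{LN19}[Proposition 1]. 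The mechanism there is that the upper tail of $T_{u^*}(\alpha)$ at scale $\log n_1$ is carried by its \emph{first} step, an $\Exp(\lambda_{11}\delta_{11})$ variable, since after two nodes are reached the exploration boundary has grown and the time to reach the remaining $\alpha$ nodes is only of order $\log\log n_1$ with negligible tail; thus $T_{u^*}(\alpha)$ has exponential upper tail of rate $\lambda_{11}\delta_{11}$ and $Y$ of rate $\lambda_{12}\delta_{21}$, and the convolution of two exponential tails decays at the smaller rate $\lambda_{11}\delta_{11}\wedge\lambda_{12}\delta_{21}$. Substituting $t_{n_1}$ yields $n_1^{-x}\,n_1^{-(\lambda_{11}\delta_{11}\wedge\lambda_{12}\delta_{21})\epsilon}=o(n_1^{-x})$, the last factor supplying the little-$o$.

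The hard part will be the tail estimate packaged in \cite{LN19}[Proposition 1]: one must show that $T_{u^*}(\alpha)$ has an exponential upper tail of rate \emph{exactly} $\lambda_{11}\delta_{11}$, not merely that it is typically small, and that the post-first-step remainder contributes nothing to the tail at scale $\log n_1$. This amounts to controlling the growth of the active boundary so that the time to reach the $k$-th node is governed by a rate of order $k\lambda_{11}(\delta_{11}-1)$, making the sum of these later waiting times concentrate at order $\log\log n_1$ with a tail too thin to matter. Granting that estimate, the two stochastic dominations, the independence of $Y$ from $T_{u^*}(\alpha)$, and the convolution computation are routine; note that the earlier Lemma \ref{weighted flooding time 1: typical time to reach alpha active nodes} is exactly the $x=1$, $Y\equiv 0$ specialisation, so the same machinery applies, the extra independent exponential $Y$ being solely responsible for the minimum appearing in the denominator.
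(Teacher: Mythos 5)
Your proposal follows essentially the same route as the paper's own proof: dominate $W_B$ by an independent exponential variable, dominate $T_{u_B}(\alpha_{n_1})$ by the reach time $T_{u^*}(\alpha_{n_1})$ of a uniform node in a $\delta_{11}$-regular random graph, add the two dominations using independence of the type-$11$ and type-$12$ weight families, and invoke \cite{LN19}[Proposition 1] for the tail of the dominating sum. The one discrepancy is the rate of the dominating exponential: you take $\lambda_{12}\delta_{21}$, the paper writes $\lambda_{11}\delta_{11}$, and your choice is the one for which $W_B\lest Y$ actually holds for all admissible degrees (the paper's rate appears to be a typo, since $W_B\st\Exp(\lambda_{12}d_{21}(B))$ need not be dominated by $\Exp(\lambda_{11}\delta_{11})$), so your version is if anything the more careful one.
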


\begin{proposition} For any $\epsilon\gr 0$ \whp,
\label{weighted flooding time 2: time to reach beta active nodes}
\begin{enumerate}
\item [(i)]$T_{u_B}(\beta) + W_B \le \frac{1}{2\lambda_{11}(\nu_{11}-1)}\log n_1 + \epsilon\log n_1$,
\item [(ii)]$T_{u_b}(\beta) + W_b \le \frac{1}{2\lambda_{11}(\nu_{11}-1)}\log n_1 + \frac{1}{\lambda_{11}\delta_{11}\wedge\lambda_{12}\delta_{21}}\log n_1  + \epsilon\log n_1$\\ for all $b\in V_2$.
\end{enumerate}
\end{proposition}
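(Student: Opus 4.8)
The plan is to mirror the proof of Proposition \ref{weighted flooding time 1: time to reach beta active nodes}, splitting the exploration started at the passive node $b$ into an \emph{ignition} phase (reaching the cheapest active neighbour $u_b$ and then the first $\alpha$ active nodes) and a \emph{bulk growth} phase (from $\alpha$ to $\beta$ active nodes inside the active subgraph $G_1$). Concretely, for every passive $b$ I would write
\[
T_{u_b}(\beta) + W_b = \big(T_{u_b}(\alpha) + W_b\big) + T_{u_b}(\alpha,\beta),
\]
so that the edge weight $W_b$ is absorbed into the ignition term while the bulk term $T_{u_b}(\alpha,\beta)$ is a pure first–passage statement started from the active node $u_b$. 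Here $\alpha=\alpha_{n_1}=\lfloor\log^3 n_1\rfloor$.

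For the bulk term I would use the deterministic domination $T_{u_b}(\alpha,\beta)\le \max_{a\in V_1}T_a(\alpha,\beta)$, valid because $u_b\in V_1$. Lemma \ref{weighted flooding time 1: typical time to reach from alpha to beta active nodes} gives, for a uniform active node, a failure probability $o(n_1^{-1})$; summing over the $n_1$ active starting nodes and union bounding (exactly as in part (2) of Proposition \ref{weighted flooding time 1: time to reach beta active nodes}) yields, \whp, $\max_{a\in V_1}T_a(\alpha,\beta)\le \frac{1}{2\lambda_{11}(\nu_{11}-1)}\log n_1+\epsilon\log n_1$. This single event simultaneously controls the bulk term for a typical $B$ and for all $b\in V_2$, and supplies the $\frac{1}{2\lambda_{11}(\nu_{11}-1)}\log n_1$ contribution in both (i) and (ii).

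The ignition term is handled by Proposition \ref{weighted flooding time 2: time to reach alpha + 1 active nodes} at two values of $x$. For (i), taking $x=0$ gives $\pr\big(T_{u_B}(\alpha)+W_B\ge \epsilon\log n_1\big)=o(1)$, so this term is negligible \whp and (i) follows by adding the bulk bound. For (ii), taking $x=1$ gives, for a uniform passive $B$, $\pr\big(T_{u_B}(\alpha)+W_B\ge (\frac{1}{\lambda_{11}\delta_{11}\wedge\lambda_{12}\delta_{21}}+\epsilon)\log n_1\big)=o(n_1^{-1})$; multiplying by $n_2$ and union bounding gives $\max_{b\in V_2}\big(T_{u_b}(\alpha)+W_b\big)\le (\frac{1}{\lambda_{11}\delta_{11}\wedge\lambda_{12}\delta_{21}}+\epsilon)\log n_1$ \whp. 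Combining through $\max_b\big(T_{u_b}(\beta)+W_b\big)\le \max_b\big(T_{u_b}(\alpha)+W_b\big)+\max_b T_{u_b}(\alpha,\beta)$ and relabelling $\epsilon$ then proves (ii).

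The step to watch is the union-bound bookkeeping in (ii): turning a per-node tail into a statement for $\max_{b}$ requires the estimate of Proposition \ref{weighted flooding time 2: time to reach alpha + 1 active nodes} to be summable over all passive nodes, which is precisely why the exponent $x=1$ is needed and works only because $n_2=\Theta(\kappa)=\Theta(n_1)$, so that $n_2\cdot o(n_1^{-1})=o(1)$. A secondary subtlety is that the active nodes $u_b$ are data-dependent rather than uniform; I avoid analysing their law by passing to the uniform-in-$a$ domination $T_{u_b}(\alpha,\beta)\le\max_{a}T_a(\alpha,\beta)$ for the bulk term, after which all randomness in the starting point has been removed.
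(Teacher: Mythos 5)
Your proof is correct and follows essentially the same route as the paper's: the same decomposition $T_{u_b}(\beta)+W_b = \big(T_{u_b}(\alpha)+W_b\big)+T_{u_b}(\alpha,\beta)$, with Proposition \ref{weighted flooding time 2: time to reach alpha + 1 active nodes} (at the appropriate exponents) handling the ignition term and Lemma \ref{weighted flooding time 1: typical time to reach from alpha to beta active nodes} plus a union bound handling the bulk term. The paper's proof is terser and leaves implicit the points you spell out (the choice of $x$, the domination $T_{u_b}(\alpha,\beta)\le\max_{a\in V_1}T_a(\alpha,\beta)$ to remove the data-dependence of $u_b$, and the need for $n_2=\Theta(n_1)$ in the union bound), but the underlying argument is the same.
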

\begin{proof} By Proposition \ref{weighted flooding time 2: time to reach alpha + 1 active nodes}, Lemma \ref{weighted flooding time 1: typical time to reach from alpha to beta active nodes} and the union bound \whp,
\begin{enumerate}
\item[(i)] 
\begin{align*}
T_{u_B}(\beta) + W_B & = T_{u_B}(\alpha) + W_B + T_{u_B}(\alpha,\beta)\\
&\le \frac{1}{2\lambda_{11}(\nu_{11}-1)}\log n_1 + 2\epsilon\log n_1.
\end{align*}
\item[(ii)]
\begin{align*}
\max_b (T_{u_b}(\beta) + W_b) &= \max_b \big(T_{u_b}(\alpha) + W_b + T_{u_b}(\alpha,\beta)\big)\\
&\le\frac{1}{2\lambda_{11}(\nu_{11}-1)}\log n_1 + \epsilon\log n_1 + \max_b\ \big(T_{u_b}(\alpha) + W_b\big)\\
&\le\frac{1}{2\lambda_{11}(\nu_{11}-1)}\log n_1 + \frac{1}{\lambda_{11}\delta_{11}\wedge\lambda_{12}\delta_{21}}\log n_1 + 2\epsilon\log n_1.\\
\end{align*} 
\end{enumerate}
\end{proof}

\begin{theorem} For any $\epsilon\gr 0$ \whp,
\label{weighted flooding time 2:upper bound}
\begin{align*}
\flood_2(A)\le\frac{1}{\lambda_{11}(\nu_{11}-1)}\log \kappa + \frac{1}{\lambda_{11}\delta_{11}\wedge\lambda_{12}\delta_{21}}\log \kappa + \epsilon\log \kappa.
\end{align*}
\end{theorem}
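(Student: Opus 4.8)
The plan is to follow exactly the template of the proof of Theorem~\ref{weighted flooding time 1:upper bound}, replacing the symmetric two-ball bound for active targets by its asymmetric counterpart for passive targets. The starting point is Proposition~\ref{weighted flooding time 2; upper bound: two balls with the size of beta nodes intersect}, which \whp\ furnishes, for every $b\in V_2$, the inequality $\tau(A,b)\le T_A(\beta)+T_{u_b}(\beta)+W_b$. Taking the maximum over $b\in V_2$ and observing that the term $T_A(\beta)$ does not depend on $b$, I would split
\begin{align*}
\flood_2(A)=\max_{b\in V_2}\tau(A,b)\le T_A(\beta)+\max_{b\in V_2}\big(T_{u_b}(\beta)+W_b\big).
\end{align*}

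Next I would bound the two summands separately on a common high-probability event. The first summand is controlled by Proposition~\ref{weighted flooding time 1: time to reach beta active nodes}(1), which gives $T_A(\beta)\le \frac{1}{2\lambda_{11}(\nu_{11}-1)}\log n_1+\epsilon\log n_1$ \whp\ for the uniformly chosen node $A$. The second summand is precisely the quantity estimated in Proposition~\ref{weighted flooding time 2: time to reach beta active nodes}(ii), which bounds $\max_{b\in V_2}\big(T_{u_b}(\beta)+W_b\big)$ by $\frac{1}{2\lambda_{11}(\nu_{11}-1)}\log n_1+\frac{1}{\lambda_{11}\delta_{11}\wedge\lambda_{12}\delta_{21}}\log n_1+\epsilon\log n_1$ \whp. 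Since both events occur \whp, so does their intersection, and on it the two estimates may be added.

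Adding the bounds, the two copies of $\frac{1}{2\lambda_{11}(\nu_{11}-1)}\log n_1$ combine into $\frac{1}{\lambda_{11}(\nu_{11}-1)}\log n_1$, yielding
\begin{align*}
\flood_2(A)\le \frac{1}{\lambda_{11}(\nu_{11}-1)}\log n_1+\frac{1}{\lambda_{11}\delta_{11}\wedge\lambda_{12}\delta_{21}}\log n_1+2\epsilon\log n_1
\end{align*}
\whp. Finally, invoking the standing assumption $n_1(\kappa)=\Theta(\kappa)$, so that $\log n_1=\log\kappa+O(1)$ and hence $\log n_1/\log\kappa\to 1$, I would replace every $\log n_1$ by $\log\kappa$ at the cost of absorbing the lower-order discrepancy (and the factor $2$ on $\epsilon$) into a fresh $\epsilon\log\kappa$ term, which gives the claimed bound.

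Almost no new difficulty arises here: all the probabilistic work—the stochastic-domination argument against the $\delta_{11}$-regular graph in Proposition~\ref{weighted flooding time 2: time to reach alpha + 1 active nodes}, and the ball-intersection estimate—has already been carried out in the preceding propositions. The only point demanding care is the ordering of the two quantifiers: the bound on $T_A(\beta)$ holds for the single random node $A$, whereas the bound on $T_{u_b}(\beta)+W_b$ must hold uniformly over all $b\in V_2$; this uniformity is already built into Proposition~\ref{weighted flooding time 2: time to reach beta active nodes}(ii) via the union bound, so the two high-probability events may simply be intersected. The expected mild obstacle is therefore purely bookkeeping, namely ensuring that the several $\epsilon$'s and the passage $\log n_1\to\log\kappa$ are consolidated into a single arbitrarily small slack.
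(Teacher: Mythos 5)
Your proposal is correct and follows essentially the same route as the paper's own proof: both use Proposition~\ref{weighted flooding time 2; upper bound: two balls with the size of beta nodes intersect} to decompose $\tau(A,b)$, then bound $T_A(\beta)$ via Proposition~\ref{weighted flooding time 1: time to reach beta active nodes}(1) and $\max_{b\in V_2}\big(T_{u_b}(\beta)+W_b\big)$ via Proposition~\ref{weighted flooding time 2: time to reach beta active nodes}(ii), finishing with $n_1(\kappa)=\Theta(\kappa)$. Your added remarks about intersecting the high-probability events and the quantifier ordering are sound bookkeeping that the paper leaves implicit.
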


\begin{proof}
By Propositions \ref{weighted flooding time 1: time to reach beta active nodes}, \ref{weighted flooding time 2; upper bound: two balls with the size of beta nodes intersect} and \ref{weighted flooding time 2: time to reach beta active nodes} \whp,
\begin{align*}
\flood_2(A) &= \max_b\tau(A,b)\\
&\le \max_b \big(T_A(\beta) + T_{u_b}(\beta) + W_{u_b}\big)\\
&\le T_A(\beta) + \max_b\big(T_{u_b}(\beta) + W_{u_b})\\
&\le\frac{1}{\lambda_{11}(\nu_{11}-1)}\log n_1 + \frac{1}{\lambda_{11}\delta_{11}\wedge\lambda_{12}\delta_{21}}\log n_1 + 2\epsilon\log n_1.
\end{align*}
The claim follows now by using assumption $n_1(\kappa) = \Theta(\kappa)$.
\end{proof}
\subsubsection{ Proof of \eqref{typical flooding time: upper bound}}

Taking maximum of flooding times $1$ and $2$, we have 
\begin{align*}
\flood(A) = \max(\flood_1(A),\flood_2(A)).
\end{align*}
Applying Theorems \ref{weighted flooding time 1:upper bound} and \ref{weighted flooding time 2:upper bound} we conclude that \whp,
\begin{align*}
\flood(A)\le \Big(\frac{1}{\lambda_{11}(\nu_{11}-1)}+\frac{1}{\lambda_{11}\delta_{11}\wedge\lambda_{12}\delta_{21}}\Big)\log \kappa +\epsilon\log \kappa.
\end{align*}

\subsection{Proof of lower bound}
In this section it will be showed that for any $\epsilon\gr 0$ \whp,
\begin{align}
\label{typical broadcast time: lower bound}
\flood(A) &\ge \Big(\frac{1}{\lambda_{11}(\nu_{11}-1)} + \frac{1}{\lambda_{11}\delta_{11}\wedge\lambda_{12}\delta_{21}} - \epsilon\Big) \log \kappa.
\end{align}

\subsection*{Notations and definitions}
The (weighted) first passage time from subset $S\subset V_1$ to $y$ in $V$ is defined by, 
\[
\tau(S,y) = \inf_{x\in S}\tau(x,y).
\]
The set of active nodes which are $t\ge 0$ time from active neighbourhood $N_1(v)$ of $v$ in $V$ is defined by
\[B'(v,t) = \{a\in V_1: \tau(N_1(v),a)\le t\}.\]
Since the weights on edges incident to $v$ in $B'(v,t)$ do not play a part, especially for passive nodes $v$ in $V_2$, we have by \cite{ADL} (replace $\dmin$ by bounded degrees in the proof of Proposition 4.13) (see also   \cite{AL}[Proposition 4.2] and \cite{DKLP}[Lemma 3.5]) the following proposition.
\begin{proposition}
\label{lower bound proofs: two balls B' with size t_n1 do not intersect}
For any $\epsilon\gr 0$ and any two distinct nodes $u$ and $v$ in $V$ with bounded degrees $\deg_1(u) = O(1)$ and $\deg_1(v)=O(1)$ \whp,
\[
B'(u,t_{n_1}) \cap B'(v,t_{n_1}) = \emptyset,
\]
where $t_{n_1} =\frac{1-\epsilon}{\lambda_{11}(\nu_{11}-1)}\log n_1$. If node $u$ (resp., $v$) is chosen uniformly at random, the same result holds without the condition $\deg_1(u) = O(1)$ (resp, $\deg_1(v) = O(1)$).
\end{proposition}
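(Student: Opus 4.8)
The plan is to adapt the exploration-and-first-moment scheme of \cite{ADL}[Proposition 4.13] (with the closely related \cite{AL}[Proposition 4.2] and \cite{DKLP}[Lemma 3.5]) to the present setting, the only structural change being that the uniform minimum degree $\dmin$ used there is replaced by the weaker hypothesis that the two source nodes have bounded type-$1$ degree. Throughout, both explorations live in the active subgraph $G_1$, since $B'(u,t)$ and $B'(v,t)$ consist of active nodes joined to the seeds by walkable paths; the passive structure enters only through the bounded active neighbourhoods $N_1(u)$ and $N_1(v)$ from which the two growths are started.

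First I would bound the sizes of the two balls and of their active boundaries. Growing $G_1$ in breadth-first order and assigning the independent $\Exp(\lambda_{11})$ weights, the cluster reached from a bounded seed within time $t$ is, up to its first self-intersection, stochastically dominated by a continuous-time (Crump--Mode--Jagers) branching process whose offspring law is the downshifted size-biased distribution of $p_{11}$, with mean $\nu_{11}$ and Malthusian parameter $\lambda_{11}(\nu_{11}-1)$. Condition \ref{regularity conditions}(2) (bounded $(2+\epsilon)$-th moment) keeps this offspring law integrable and forces the active boundary of each ball to be of the same order as the ball itself. Because $\deg_1(u)=O(1)$ and $\deg_1(v)=O(1)$, the initial populations are only a bounded multiplicative factor, so the radius $t_{n_1}$ is calibrated precisely so that each of $B'(u,t_{n_1})$, $B'(v,t_{n_1})$ and their boundaries has size $o(\sqrt{n_1})$ \whp. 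This is exactly the step in which the bounded-degree hypothesis takes over the role of $\dmin$: the seed contributes a constant factor rather than a fixed degree.

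Next I would estimate the intersection probability by a first-moment (birthday-paradox) bound in the configuration model. Revealing first the $u$-ball together with its unmatched active half-edges, the $v$-exploration then pairs $O(\lvert B'(v,t_{n_1})\rvert)$ boundary half-edges uniformly among the $\Theta(n_1)$ half-edges still free; the probability that any of them is matched into the already revealed $u$-ball is at most a constant times $\lvert B'(u,t_{n_1})\rvert\,\lvert B'(v,t_{n_1})\rvert / n_1$, the degree weights being absorbed into the constant via the moment condition. Since both balls are $o(\sqrt{n_1})$ \whp, this expectation is $o(1)$, and Markov's inequality yields $B'(u,t_{n_1})\cap B'(v,t_{n_1})=\emptyset$ \whp. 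For the final sentence of the statement, a uniformly chosen node has type-$1$ degree $O(1)$ \whp by the regularity conditions, so conditioning on that event reduces the uniform case to the bounded-degree case already handled.

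I expect the main obstacle to be the rigorous coupling between the configuration-model exploration and the idealized branching process: one must control the early self-intersections (cycles) inside each ball and bound the number of active boundary half-edges uniformly given only a $(2+\epsilon)$-th moment, so that both the $o(\sqrt{n_1})$ size estimate and the first-moment intersection bound carry the required $o(1)$ error. A secondary point is to make these estimates uniform over the admissible bounded-degree pairs $u,v$, which is what ultimately permits this disjointness statement to be combined with the remaining steps of the lower bound.
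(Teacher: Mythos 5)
Your overall template --- exploration of the active subgraph coupled to a CMJ branching process with Malthusian parameter $\lambda_{11}(\nu_{11}-1)$, seeds of bounded type-$1$ degree contributing only a constant multiplicative factor, and a first-moment (birthday-paradox) collision bound in the configuration model --- is exactly the argument the paper points to; the paper gives no self-contained proof, deferring to \cite{ADL} (Proposition 4.13, with $\dmin$ replaced by bounded degrees). However, your key quantitative step is wrong, and it is inconsistent with the growth rate you yourself identify. If the ball grows like $e^{\lambda_{11}(\nu_{11}-1)t}$, then at radius $t_{n_1}=\frac{1-\epsilon}{\lambda_{11}(\nu_{11}-1)}\log n_1$ it contains $n_1^{1-\epsilon+o(1)}$ active nodes \whp, not $o(\sqrt{n_1})$; the $o(\sqrt{n_1})$ calibration you invoke corresponds to \emph{half} this radius, $\frac{1-\epsilon}{2\lambda_{11}(\nu_{11}-1)}\log n_1$. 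Consequently your first-moment bound for the intersection is of order $\abs{B'(u,t_{n_1})}\,\abs{B'(v,t_{n_1})}/n_1\approx n_1^{1-2\epsilon}$, which diverges for $\epsilon<1/2$, so the proof collapses precisely at the step you describe as ``calibrated precisely.''

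This is not a repairable slip inside your argument but an inherent obstruction: for $\epsilon<1/2$ the two radius-$t_{n_1}$ balls in fact intersect \whp. Indeed, for fixed $u,v$ of bounded type-$1$ degree the first passage time between $N_1(u)$ and $N_1(v)$ is at most $(1+o(1))\frac{\log n_1}{\lambda_{11}(\nu_{11}-1)}$ \whp, and a near-midpoint on the optimal walkable path is an active node lying within $(\tfrac12+o(1))\frac{\log n_1}{\lambda_{11}(\nu_{11}-1)}<t_{n_1}$ of both neighbourhoods, hence in both balls. What your machinery, correctly executed, does prove is either (a) the disjointness statement with $t_{n_1}$ replaced by $t_{n_1}/2$, or (b) the asymmetric statement that \whp the full-radius ball $B'(A,t_{n_1})$ avoids the \emph{bounded} set $N_1(v)$, whose failure probability is $O(\abs{B'(A,t_{n_1})}\cdot\abs{N_1(v)}/n_1)=O(n_1^{-\epsilon})$ by the same first-moment bound. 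Version (b) is in fact all that the paper's downstream lower-bound argument uses (there $2a=t_{n_1}$, and the witness of the intersection event is a node of $N_1(v)$ itself), so your proof should be restructured to establish (a) or (b) rather than the symmetric full-radius disjointness, which no argument can deliver for small $\epsilon$.
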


\textit{Proof of \eqref{typical broadcast time: lower bound}}.
The claim follows by applying assumptions $n_1(\kappa) = \Theta(\kappa)$ and $n_2(\kappa) = \Theta(\kappa)$ into the following proposition.

\begin{proposition}
For a uniformly chosen active node $A$ in $V_1$ and any $\epsilon\gr 0$ \whp,
\begin{align*}
\flood(A)\ge \frac{1-\epsilon}{\lambda_{11}(\nu_{11}-1)}\log n_1 + \max_{i=1,2}\Big(\frac{1-\epsilon}{\lambda_{1i}\delta_{i1}}\log n_i\Big).
\end{align*}
\end{proposition}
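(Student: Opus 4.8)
The plan is to exhibit, separately for each node type $i\in\{1,2\}$, a target $v$ of minimal type-$1$ degree $\delta_{i1}$ that is simultaneously \emph{far} from $A$ in the active metric and separated from its own active neighbourhood by a large ``last-mile'' weight. Since $\flood(A)=\max_{b}\tau(A,b)$, it suffices to produce one such $v$ for each $i$ and keep the larger contribution: the intersection of two \whp\ events is again \whp, so \whp\ $\flood(A)$ exceeds both bounds and hence their maximum. The starting point is the decomposition of the last step of any walkable path into $v$: such a path enters $v$ through one of its active neighbours, so $\tau(A,v)=\min_{a\in N_1(v)}(\tilde\tau(A,a)+W_{av})$, where $\tilde\tau(A,a)$ is the first passage time to $a$ avoiding $v$. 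Writing $W_v=\min_{a\in N_1(v)}W_{av}$, the key observation is that if $\tau(A,N_1(v))\ge t_{n_1}$ (equivalently, \emph{every} neighbour $a$ has $\tau(A,a)\ge t_{n_1}$), then $\tilde\tau(A,a)+W_{av}\ge t_{n_1}+W_v$ for each $a$, whence $\tau(A,v)\ge t_{n_1}+W_v$, with $t_{n_1}=\tfrac{1-\epsilon}{\lambda_{11}(\nu_{11}-1)}\log n_1$.

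To certify $\tau(A,N_1(v))\ge t_{n_1}$ I would invoke the non-intersection Proposition \ref{lower bound proofs: two balls B' with size t_n1 do not intersect} with $u=A$ (uniformly chosen, so no degree restriction is needed) and $v$ a minimal-degree candidate. Since $N_1(v)\subseteq B'(v,t_{n_1})$ and $a\notin B'(A,t_{n_1})$ forces $\tau(A,a)\ge\tau(N_1(A),a)>t_{n_1}$, the disjointness $B'(A,t_{n_1})\cap B'(v,t_{n_1})=\emptyset$ yields $N_1(v)\cap B'(A,t_{n_1})=\emptyset$ and therefore $\tau(A,N_1(v))>t_{n_1}$. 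I would then show that $\Theta(n_i)$ of the type-$i$ minimal-degree nodes are far in this sense: by the branching-process growth underlying Lemmas \ref{weighted flooding time 1: typical time to reach alpha active nodes} and \ref{weighted flooding time 1: typical time to reach from alpha to beta active nodes} (see also \cite{AL,DKLP}) the active ball $B'(A,t_{n_1})$ has size $n_1^{1-\epsilon+o(1)}=o(n_1)$, and since each active node is a neighbour of only boundedly many minimal-degree nodes on average, discarding the candidates with a neighbour in $B'(A,t_{n_1})$ removes only $o(n_i)$ of them.

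Finally, restricting to these $\Theta(n_i)$ far targets, each $W_v$ is the minimum of $\delta_{i1}$ independent $\Exp(\lambda_{1i})$ weights, so $W_v\st\Exp(\lambda_{1i}\delta_{i1})$, and the $W_v$ are mutually independent (distinct incident edges). An extreme-value estimate then gives $\max_v W_v\ge\tfrac{1-\epsilon}{\lambda_{1i}\delta_{i1}}\log n_i$ \whp, since $\pr(W_v<\tfrac{1-\epsilon}{\lambda_{1i}\delta_{i1}}\log n_i)=1-n_i^{-(1-\epsilon)}$ and $(1-n_i^{-(1-\epsilon)})^{\Theta(n_i)}\to 0$; the maximiser is far and carries a large last mile, so $\tau(A,v)\ge t_{n_1}+\tfrac{1-\epsilon}{\lambda_{1i}\delta_{i1}}\log n_i$, and taking the better of $i=1,2$ gives the claim. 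The main obstacle is the \emph{decoupling}: far-ness must be decided without revealing the edges incident to the candidate targets, so that conditionally on the far set the weights $\{W_v\}$ remain independent exponentials. This is precisely why the balls $B'$ are built from $\tau(N_1(\cdot),\cdot)$, which ignores a target's own incident weights; the delicate point is to upgrade the two-node non-intersection of Proposition \ref{lower bound proofs: two balls B' with size t_n1 do not intersect} to hold across the whole $\Theta(n_i)$-sized family at once—most safely through a first-moment bound on the size of $B'(A,t_{n_1})$ that sidesteps a union bound over all targets.
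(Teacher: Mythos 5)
Your skeleton matches the paper's: take minimum-degree targets $v$, lower-bound $\tau(A,v)$ by the distance from $A$ to $N_1(v)$ plus the last-mile weight $W_v=\min_{a\in N_1(v)}W_{av}$, certify the distance term via Proposition \ref{lower bound proofs: two balls B' with size t_n1 do not intersect}, and extract the second term from the fact that $W_v\st\Exp(\lambda_{1i}\delta_{i1})$ for many candidates. The difference is the order in which you expose randomness, and that is where the gap sits. You fix the \emph{far} set first and then run an extreme-value computation over $\{W_v\}$, asserting that conditionally on the far set these remain independent exponentials. For passive targets ($i=2$) this is sound: walkable paths never traverse passive nodes, so $B'(A,t_{n_1})$ and hence far-ness are measurable with respect to the type-$11$ weights alone, which are independent of every type-$12$ last-mile weight. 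But for active targets ($i=1$) the assertion fails: walkable paths from $N_1(A)$ may pass \emph{through} one minimum-degree active candidate on their way to the neighbourhood of another, so the ball $B'(A,t_{n_1})$ --- and therefore the event that candidate $v$ is far --- is a function of the incident weights of the \emph{other} candidates. Conditioning on the realized far set therefore tilts the joint law of $\{W_v\}$, and your parenthetical ``distinct incident edges'' also breaks when two minimum-degree active candidates are adjacent, since the shared edge weight enters both minima. Your proposed patch (a first-moment bound on $\abs{B'(A,t_{n_1})}$) attacks the wrong problem: the \emph{size} of the far set was never the obstacle --- the pairwise Proposition, applied to each of the $\Theta(n_i)$ candidates and followed by Markov's inequality, already shows that only $o(n_i)$ candidates are near \whp{} --- whereas no ball-size bound restores the conditional independence you need.

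The paper's proof is organized precisely so that no such joint statement is ever required. It first declares $v$ bad when all $\delta_{i1}$ incident weights exceed $b_i=\frac{1-\epsilon}{\lambda_{1i}\delta_{i1}}\log n_i$, and proves $M_i\ge\tfrac12\E(M_i)=\Theta(n_i^{\epsilon})$ \whp{} by Chebyshev, where the variance bound $\Var(M_i)\le(\delta_{i}+1)\E(M_i)$ explicitly absorbs the adjacent-candidate correlations you glossed over. Only then does it intersect with far-ness: $\E(N_i)=\sum_v\pr\big(C^{(i)}_v,\,B'(A,t_{n_1})\cap B'(v,t_{n_1})\neq\emptyset\big)=o(\E(M_i))$, which uses nothing beyond the independence of a \emph{single} candidate's incident weights from its own ball event together with the pairwise Proposition, and Markov's inequality then gives $N_i\le\tfrac14\E(M_i)$, hence a bad far node exists. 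To repair your route for $i=1$ you would either have to prove a genuinely new conditional-independence statement for the ball, or reorganize into a count of ``far and bad'' nodes (second moment), which is exactly what the paper's Chebyshev-plus-Markov decomposition accomplishes; your $i=2$ argument can be completed essentially as you wrote it.
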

\begin{proof}
It is sufficient to show that each group $V_i$ has \whp a node $v_i^*$ such that
\begin{align}
\label{broadcast time: lower bound proof; the existences of bad nodes}
\tau(A,v_i^*)\ge \frac{1-\epsilon}{\lambda_{11}(\nu_{11}-1)}\log n_1 + \frac{1-\epsilon}{\lambda_{1i}\delta_{i1}} \log n_i
\end{align} 
for all $i=1,2$.\\

Denote parameters $a=\frac{1-\epsilon}{2\lambda_{11}(\nu_{11}-1)}\log n_1 $ and $b_i=\frac{1-\epsilon}{\lambda_{1i}\delta_{i1}}\log n_i$ (depending on $\kappa$), where $i=1,2$.  Let $S_{\delta_{i1}}$ be the set of type-$i$ nodes with minimum degree $\delta_{i1}$. A node in $S_{\delta_{i1}}$ is said to be bad if all of its $\delta_{i1}$-edge weights is greater than $b_i$ ($i=1,2)$. Let $C^{(i)}_v$ be the event that node $v$ in $S_{\delta_i}$ is bad. The probability of this event is
\begin{align*}
\pr(C^{(i)}_v) = \pr(\min_{u\in N_1(v)}W_{uv}\ge b_i) = n_i^{-1+\epsilon}.
\end{align*}
Let $M_i = \sum_{v}\1_{C^{(i)}_v}$ be the count of type-$i$ bad nodes in $S_{\delta_i}$. Then the expected value and the variance of $M_i$ are

\begin{align*}
\E(M_i) &=  \sum_v\pr(C^{(i)}_v) = \abs{S_{\delta_i}} n_i^{-1+\epsilon}  = (p_{i1}(\delta_i) + o(1)) n_i^{\epsilon},\\
\Var(M_i) &= \sum_{u,v}\cov(\1_{C^{(i)}_u},\1_{C^{(i)}_v}) \\
&= \sum_{u}\var(\1_{C^{(i)}_u}) + \sum_u \sum_{v\in N_1(u)}\cov(\1_{C^{(i)}_u},\1_{C^{(i)}_v})\\
&\le (\delta_i + 1) \E(M_i).
\end{align*}
By Chebyshev's inequality \whp, 
\begin{align}
\label{the numbers of bad nodes}
M_i\ge \frac{1}{2}\E(M_i) = \frac{1}{2} (p_{i1}(\delta_i) + o(1))n_i^{\epsilon}.
\end{align}  
Let $N_i$ be the number of type-$i$ bad nodes which takes at most $2a + b_i$ time to reach from $A$. Since the weights on edges incident to $A$ and $v$ do not play a part, by Proposition \ref{lower bound proofs: two balls B' with size t_n1 do not intersect}
\begin{align*}
\pr(C^{(i)}_v,B'(A,t_{n_1}) \cap B'(v,t_{n_1}) \neq \emptyset)  = o(\pr(C^{(i)}_v))
\end{align*}
and 
\begin{align*}
\E(N_i)= \sum_{v} \pr(C^{(i)}_v,B'(A,t_{n_1}) \cap B'(v,t_{n_1}) \neq \emptyset) = o(\E(M_i)).
\end{align*}
By Markov's inequality \whp $N_i\le\frac{1}{4}\E(M_i)$. Hence \whp,
\begin{align*}
M_i-N_i\ge\frac{1}{4}\E(M_i) =\frac{1}{4}(p_{i1}(\delta_i)+ o(1))n_i^{\epsilon}\ge 1,
\end{align*}
where $i=1,2$. These implies \whp there exist desired nodes $v_i^*$ satisfying \eqref{broadcast time: lower bound proof; the existences of bad nodes}. 
\end{proof}

\newpage


\begin{thebibliography}{9}


\bibitem{PAL}\textsc{P. Aalto, L. Leskel{\"a}:} Information spreading in a large population of active transmitters and passive receivers. SIAM Journal on Applied Mathematics 75, 1965--1982, 2015.

\bibitem{AP}\textsc{P. Antal and A. Pisztoza:} On the chemical distance for supercritical Bernoulli percolation. Annals of Probab., 24, 1036--1048, 1996.

\bibitem{ADL}\textsc{H. Amini, M. Draief and M. Lelarge:} Flooding in weighted sparse random graphs. SIAM J. Discrete Math, Vol. 27, No.1, 2013.  

\bibitem{AL}\textsc{H. Amini and M. Lelarge:} The diameter of weighted random graphs. Annals of Appl. Probab., 25(3): 1686--1727, 2015. 


\bibitem{ADH}\textsc{ A. Auffinger, M. Damron, and J. Hanson:} 50 years of first passage percolation, arXiv e-prints, 2015.

\bibitem{BHH}\textsc{S. Bhamidi, R. van der Hofstad, G. Hooghiemstra:} First passage percolation on random graphs with finite mean degrees. Annals of Appl. Probab., 20(5): 1907--1965, 2010.

\bibitem{BHH17}\textsc{S. Bhamidi, R. van der Hofstad, G. Hooghiemstra:} Universality for first passage percolation on sparse random graphs. Annals of Probab. 45(4): 2568--2630, 2017.

\bibitem{BC}\textsc{E. A. Bender, E. R. Canfield:} The asymptotic number of labelled graphs with given degree sequences. Journal of Combinatorial Theory, Series A, Vol. 24, Issue 3, 1978.


\bibitem{BS13}\textsc{J. Blanchet and A. Stauffer:} Characterizing optimal sampling of binary contingency tables via the configuration model. Random Structures \& Algorithms 42, 159--184, 2013.

\bibitem{B}\textsc{B. Bollob{\'a}s:} A probabilistic proof of an asymptotic formula for the number of labelled regular graphs, European Journal of Combinatorics, Vol. 1, Issue 4, p.311--316, 1980.

\bibitem{BH}\textsc{S. Broadbent and J. Hammersley:} Percolation processes: I. Crystals and mazes, Proc. Cambridge
Philos. Soc., 53, pp. 629--641, 1957.

\bibitem{CO}\textsc{N. Chen, M. Olvera-Cravioto:} Directed random graphs with given degree distributions. Stoch. Syst., Vol. 3, No. 1, pp. 147-186, 2013.

\bibitem{DKLP}\textsc{J. Ding, J. H. Kim, E. Lubetzky and Y. Peres:} Diameters in supercritical random graphs via first passage percolation. Combinatorics, Probability and Computing, Vol.19, issue 5--6, 2010.

\bibitem{DC}\textsc{H. Duminil-Copin:} Sixty years of percolation, arXiv  2017.

\bibitem{ME}\textsc{M. Eden:} A two-dimensional growth process. Proc. 4th Berkeley Sympos. Math. Statist. and Prob., Vol. IV, 223-239, Univ. California Press, Berkeley, Calif, 1961.


\bibitem{G57}\textsc{D. Gale:} A theorem on flows in networks, Pacific J. Math. 7, 1073--1082, 1957.

\bibitem{OG}\textsc{O. Garet:} Capacitive Flows on a 2D random net, Ann. of Appl. Probab., 19,641-660, 2009.


\bibitem{GM}\textsc{O. Garet, R. Marchand:} First-passage competition with different speeds: positive density for both species is impossible. Elect. Journal. of Probab., 13, 2118--2159, 2008.

\bibitem{GK}\textsc{G. Grimmet and H. Kesten:} First-passage percolation, network flow and electrical resistances Z..Wahrsch. Verw. Gebiete, 66, 335--366,1984.

\bibitem{HW}\textsc{J. Hammersley and D. Welsh:} First-passage percolation, subadditive processes, stochastic networks, and generalized renewal theory. Proc. Internat. Res. Semin., Statist. Lab., Univ. California, Berkeley, Calif., 61--110, Springer-Verlag, New York, 1965.



\bibitem{J}\textsc{S. Janson:} One, two and three times $\log n/n$ for paths in a complete graph with random weights. Combin. Probab. Comput., 8(4):347--361,1999.

\bibitem{J09}\textsc{S. Janson:} The probability that a random multigraph is simple. Combin. Prob. Comp., 18 (2009), pp. 205--225.

\bibitem{KSSV}\textsc{R. M. Karp, C. Schindelhauer, S. Shenker, and B. Vocking:} Randomized rumour spreading, in Proceedings of the 41st Annual Symposium on Foundations of Computer Science (FOCS), IEEE Press, Piscataway, NJ, 2000, pp. 565--574.

\bibitem{LN17}\textsc{L. Leskel{\"a} and H. Ngo:} The impact of degree variability on connectivity
properties of large networks. Internet Mathematics, Vol. 13, 21.02.2017.
\bibitem{LN19}\textsc{L. Leskel{\"a} and H. Ngo:} First passage percolation in sparse random graphs with boundary weights. Journal of Applied Probability 52(2):458-471, 2019.

\bibitem{MOA}\textsc{A. W. Marshall, I. Olkin and B. C. Arnold:} Inequalities: Theory of Majorization and Its Applications. Springer, 2011.

\bibitem{MR}\textsc{M. Molloy and B. Reed:} The size of the giant component
of a random graph with a given degree sequence. Comb. Probab.
Comput., Vol. 7, Issue 3, p.295--305, 1998.

\bibitem{Newman}\textsc{M. E. J. Newman:} Networks -- An Introduction. Oxford University Press, 2010.

\bibitem{Pittel}\textsc{B. Pittel:} On spreading a rumor, SIAM J. Appl. Math., 47 (1987), pp. 213--223.

\bibitem{SB}\textsc{K. Spricer and T. Britton:} The configuration model for partially directed graphs. Journal of Statistical Physics, Volume 161, Issue 4, pp 965--985, 2015.


\bibitem{RHI}\textsc{R. van der Hofstad:} Random graphs and complex networks -- Vol. I. Cambridge University Press, 2017.

\bibitem{RHII}\textsc{R. van der Hofstad:} Random graphs and complex networks -- Vol. II, Lecture notes, 2020.


\bibitem{M}\textsc{P. Van Mieghem:} Perfomace Analysis of Complex Networks and SystemsCambridge University Press, 2014.





 
 
 

\end{thebibliography}
\end{document}